\newcommand{\ran}[0]  {
  \mathit{Ran}
}
\newcommand{\lan}[0]  {
  \mathit{Lan}
}
\newcommand{\lif}[0]  {
  \mathit{Lift}
}
\newcommand{\disc}[0]  {
  \mathit{Disc}
}
\newcommand{\comma}[2]  {%
{#1\!\!\downarrow\!\!{#2}}
}
\newcommand{\catl}[1]  {
  \mathbb{#1}
}
\newcommand{\catw}[1]  {
  \mathbf{#1}
}
\newcommand{\onecat}[0]  {
  \catw{Cat}
}
\newcommand{\cat}[0]  {
  \catw{cat}
}
\newcommand{\word}[1]  {
  \mathit{#1}
}
\newcommand{\op}[1]  {
{{#1}^\word{op}}
}
\newcommand{\mor}[3]  {
  #1 \colon #2 \rightarrow #3
}
\newcommand{\dist}[3]  {
  #1 \colon #2 \nrightarrow #3
}
\newcommand{\tuple}[1]  {
	\langle #1 \rangle
}
\newcommand{\id}[1]  {
  \word{id}_{#1}
}
\newcommand{\yon}[4]  {
  #1 \colon #2 \triangleright \langle #3, #4 \rangle
}
\newcommand{\ph}[1] {%
	{{-}_{#1}}
}
\newbox\anglebox
\newbox\aanglebox
\begin{document}

\title{Logical systems II:\\Free semantics}

\author{Michal R. Przybylek}
\institute{Faculty of Mathematics, Informatics and Mechanics\\
University of Warsaw\\
Poland}
\maketitle

\begin{abstract}
This paper is a sequel to \cite{log1}. It provides a general 2-categorical setting for extensional calculi and shows how intensional and extensional calculi can be related in logical systems. We define Yoneda triangles as relativisations of internal adjunctions, and use them to characterise universes that admit a notion of convolution. We show that such universes induce semantics for lambda calculi. We prove that a construction analogical to enriched Day convolution works for categories internal to a locally cartesian closed category with finite colimits.
\end{abstract}

\section{Introduction}
\label{c:free:semantics}
A well-known Mac Lane's slogan says ``adjunctions arise everywhere''. One may find adjunctions in variety of concepts from theoretical computer science: definitions of Galois correspondence between syntax and semantics together with Dedekind-MacNeille completion as the fixed-point of the adjunction, power objects, function spaces (in the form of lambda abstraction), logical connectives and quantifications, to classical mathematics: free structures, definitions of tensor products, distributions, and many more. In some cases, however, the definition of an adjunction is too restrictive. The following example is taken from \cite{log1}.
\begin{example}[Topological spaces]\label{e:top:closed}
Although category of topological spaces is not cartesian closed, very many interesting topological spaces are exponentiable. In fact for a topological space $A$ there exists right adjoint to $\mor{- \times A}{\catw{Top}}{\catw{Top}}$ if and only if $A$ is a core-compact space \cite{isbellspace}, which means that the underlying locale of its open sets is continuous. One then may think that a restriction to the subcategory of topological spaces consisting of core-compact spaces could work. However, this again is not the case, because an exponent of two core-compact spaces need not be core-compact.
\end{example}
The example shows that a category may not be closed in itself, but in a bigger embedding category. Actually, the above situation is quite simple, because objects from the subcategory were exponentiable with respect to \emph{all} objects in the embedding category. Here is a less trivial example.
\begin{example}[Partially recursive functions]\label{e:prf}
Let us consider a category consisting of two objects --- the set of natural numbers $\mathbb{N}$, together with partially recursive functions, and a singleton $1$, with all singleton maps $1 \rightarrow \mathbb{N}$. This category is cartesian with binary products given by any effective pairing. It is not, however, cartesian closed --- the evaluation cannot be partially recursive --- was it, one could test for equality of two partially recursive functions by checking the equality of their corresponding natural numbers\footnote{In other words --- if the coding is effective then it has to be ambiguous.}. On the other hand, it is ``closed'' in the category of $\Pi_2^0$ functions in the sense of arithmetic hierarchy. Similarly, the category of $\Pi_2^0$ functions is not cartesian closed in itself. In fact, a simple diagonalisation argument shows that none of $\Sigma_n^m, \Pi_n^m, \Delta_n^m$ is cartesian closed.
\end{example}
Such situations frequently occur when a construction over an object is of a poorer quality than the original object. Here is our driving example
\begin{example}[Russell paradox]\label{e:russelk:paradox}
In a ZFC set theory\footnote{The result generalises to any higher-order type theory \cite{forti}.} there can be no set $\mathcal{U}$ universal for all sets --- i.e.~there is no set $\mathcal{U}$ such that every set is isomorphic to exactly one element of $\mathcal{U}$. 
However, there exists a (necessarily proper\footnote{Otherwise, by the axiom of union we could form $A_0 = \bigcup_k \bigcup \mathcal{U}_k$, and $A = P(A_0)$ would not be classified by any $\mathcal{U}_k$.}) family of sets $\mathcal{U}_0 \subseteq \mathcal{U}_1 \subseteq \mathcal{U}_2 \subseteq \cdots$ that is collectively universal, which means that for every set $A$ there exists $\mathcal{U}_k$ and exactly one $X \in \mathcal{U}_k$ with $A \approx X$.
\end{example}
We should think of universes as 2-dimensional analogue of the internal truth-values object $\Omega$ in a topos --- just like $\Omega$ classifies \emph{internal logic} of a category, a universal object tries to classify the \emph{external logic}. The attempt to classify the full external logic is, however, futile, as stated in the above example. Therefore, we have to focus on a classification of some parts of the external logic.

In this paper we set forth categorical foundations for ``2-powers'', which shall generalise partial classifiers of external logics for objects in a 2-category. We show how internal logical systems in any 2-category with 2-powers carry free semantics on objects. We propose a notion of a Yoneda (bi)triangle and use it to generalise to the 2-categorical setting the famous construction of convolution introduced by Brain Day in his PhD dissertation \cite{day} for enriched categories. As a complementary result we prove the convolution theorem for internal categories.

\section{Categorical 2-powers} \label{s:yoneda:triangle}
To better understand our definition of ``2-powers'', let us first recall how one may define ordinary powers. With every regular category\footnote{A category is called regular (Chapter 2, Volume 2 of \cite{borceux}, Chapter A1.3 of \cite{elephant}, Chapter 4, Section 4 of \cite{jacobs},  (Chapter 1.5 of \cite{scedrov}) if it has finite limits, regular epimorphisms are stable under pullbacks, and every morphism factors as a regular epimorphism followed by a monomorphism.} $\catl{C}$ there is associated a 2-poset (i.e.~poset-enriched\footnote{Poset-enriched category, also called 2-poset, is a category enriched in the category of posets.}) of its internal relations $\catw{Rel}(\catl{C})$ together with a bijective-on-objects functor $\mor{J}{\catl{C}}{\catw{Rel}(\catl{C})}$. Furthermore, the right adjoint of $J$, if it exists, $\mor{P}{\catl{C}}{\catw{Rel}(\catl{C})}$ induces the natural isomorphism: 
\begin{center}
\begin{tabular}{c}
$\hom_{\catw{Rel}(\catl{C})}(J(A), B)$\\
\hline\hline
$\hom_\catl{C}(A, P(B))$\\
\end{tabular}
\end{center}
If additionally $\catl{C}$ has a terminal object $1$ then, recalling the definition of an internal relation, one gets:
\begin{center}
\begin{tabular}{c}
$\mathit{sub}(\catl{C})(A)$\\
\hline\hline
$\hom_{\catw{Rel}(\catl{C})}(J(A), 1)$\\
\hline\hline
$\hom_\catl{C}(A, P(1))$\\
\end{tabular}
\end{center}
which makes $\catl{C}$ a topos with power functor $P$ and the subobject classifier $\Omega = P(1)$. All of the above may be abstractly characterised by starting with a regular fibration $\mor{p}{\catl{E}}{\catl{C}}$ on a finitely complete category $\catl{C}$, then constructing the category of $p$-internal relations $\catw{Rel}(p)$ and a bijective-on-objects functor $\mor{J}{\catl{C}}{\catw{Rel}(p)}$. We shall recover the classical situation by taking for $p$ the usual subobject fibration. Now we would like to argue that the right notion of the category of relations over $\catl{C}$ is encapsulated by \emph{any} bijective-on-objects functor $\mor{J}{\catl{C}}{\catl{D}}$, where $\catl{D}$ is a 2-poset. Here are some intuitions. Let us recall that any such bijective-on-objects functor corresponds to a poset-enriched module monad ($\catw{Pos}$ is the category of partially ordered sets and pointwise-ordered monotonic functions):
$$\mor{\hom_\catl{D}(J(\ph{1}), J(\ph{2}))}{\catl{C}^{op} \times \catl{C}}{\catw{Pos}}$$
with unit:
$$\mor{\eta}{\hom_\catl{C}(\ph{1}, \ph{2})}{\hom_\catl{D}(J(\ph{1}), J(\ph{2}))}$$
given by the action of $J$, and multiplication:
$$\mor{\mu}{\int^{B \in \catl{C}}\hom_\catl{D}(J(\ph{1}), J(B)) \times \hom_\catl{D}(J(B), J(\ph{2}))}{\hom_\catl{D}(J(\ph{1}), J(\ph{2}))}$$
induced by the composition from $\catl{D}$. This monad gives a ``fibred span'' (i.e.~a span where one leg is a fibration and the other is an opfibration)  ${\catl{C} \overset{\pi_1}\leftarrow \int \hom_\catl{D}(J(\ph{1}), J(\ph{2})) \overset{\pi_2}\rightarrow \catl{C}}$ with a monoidal action induced by a generalised Grothendieck construction --- the total category is defined as the following coend:
$$\int \hom_\catl{D}(J(\ph{1}), J(\ph{2})) = \int^{X,Y \in \catl{C}} \catl{C}/X \times \hom_\catl{D}(J(X), J(Y)) \times Y/\catl{C}$$
where:
\begin{eqnarray*}
\mor{\catl{C}/(-)}{\catl{C}}{\onecat} \\
\mor{(-)/\catl{C}}{\op{\catl{C}}}{\onecat}  
\end{eqnarray*}
are the slice an coslice functors,  and $\pi_1, \pi_2$ are the obvious projections.
If we assume that $\catl{C}$ has a terminal object $1$, then:
$$\mor{\hom_\catl{D}(J(1), J(-))}{\catl{C}}{\catw{Pos}}$$
by Grothendieck construction corresponds to an opfibration:
$$\mor{\pi_{\hom_\catl{D}(J(1), J(-))}}{\int \hom_\catl{D}(J(1), J(-))}{\catl{C}}$$
and:
$$\mor{\hom_\catl{D}(J(-), J(1))}{\catl{C}^{op}}{\catw{Pos}}$$
corresponds to a fibration:
$$\mor{\pi_{\hom_\catl{D}(J(-), J(1))}}{\int \hom_\catl{D}(J(-), J(1))}{\catl{C}}$$
In case $\catl{D} = \catw{Rel}(p)$ these two functors are ``essentially the same'' and encode the fibration $\mor{p}{\catl{E}}{\catl{C}}$; one may check that our fibred span arises by pulling back $\mor{p}{\catl{E}}{\catl{C}}$ along the Cartesian product functor $\mor{\times}{\catl{C} \times \catl{C}}{\catl{C}}$ to obtain a bifibration $\mor{\mathit{rel}(p)}{\catw{Rel}(p)}{\catl{C} \times \catl{C}}$ and postcomposing it with two projections. For this reason, the functor $\mor{J}{\catl{C}}{\catl{D}}$ does not lose any information about the regular logic associated to $\catl{C}$. Second, we do believe that a more natural setting for relations is a fibred span than a bifibration --- this allows us to distinguish between relations $A \nrightarrow B$ from relations $B \nrightarrow A$ and generalise the construction to higher categories. For example, as suggested by Jean Benabou, the role of relations between categories should be played by distributors. For any complete and cocomplete symmetric monoidal closed category $\catl{C}$, we may define a 2-distributor:
$$\mor{\hom_{\catw{Dist}(\catl{C})}}{\catw{Cat}(\catl{C})^{op} \times \catw{Cat}(\catl{C})}{\catw{Cat}}$$
sending $\catl{C}$-enriched categories $A, B$ to the category of $\catl{C}$-enriched distributors $A \nrightarrow B$ and $\catl{C}$-enriched natural transformations. Because:
$$\hom_{\catw{Dist}(\catl{C})}(A, B) \not\approx \hom_{\catw{Dist}(\catl{C})}(B, A)$$
the 2-distributor $\hom_{\catw{Dist}(\catl{C})}$ is not induced by any (co)indexed 2-category.

\begin{example}[Allegory]\label{e:allegory}
Another way to look at these concepts is through the notion of an \emph{allegory} \cite{scedrov}. An allegory is a pair $\tuple{\catl{A}, \mor{(-)^\star}{\catl{A}}{\op{\catl{A}}}}$, where $\catl{A}$ is a poset-enriched category (2-poset), $\mor{(-)^\star}{\catl{A}}{\op{\catl{A}}}$ is an identity-on-objects duality involution, and:
\begin{itemize}
\item for each $A, B \in \catl{A}$ the poset $\hom(A, B)$ has binary conjunctions
\item for each triple of morphisms $\xy \morphism(0,0)|b|<200,0>[A`B;f] \morphism(200,0)|b|<200,0>[B`C;g] \morphism(0,0)|a|/{@{>}@/^1em/}/<400,0>[A`C;h]\endxy$ the following holds:
$$(g \circ f) \wedge h \leq g \circ (g^\star \circ h \wedge f)$$
\end{itemize}
Every allegory $\catl{A}$ induces a bijective on objects embedding $\mor{J}{\catl{C}}{\catl{A}}$ by forming a subcategory $\catl{C}$ consisting of morphisms that have right adjoints. Moreover, if $\catl{A}$ is a tabular allegory\footnote{An allegory is tabular if every morphism $h$ admits a decomposition $h = g^\star \circ f$ such that $f^\star \circ f \wedge g^\star\circ g = \mathit{id}$.}, then $\catw{Rel}(\catl{C}) \approx \catl{A}$ and $\catl{C}$ is (locally) regular (2.148 in \cite{scedrov}; moreover, the converse is true by 2.132 in the same book).
\end{example}

As mentioned earlier, a (locally small) category with finite limits has power objects iff it is regular and the induced functor $\mor{J}{\catl{C}}{\catw{Rel}(\catl{C})}$ has right adjoint. It is natural then to provide the following generalisation of a power functor. If $\mor{J}{\catl{C}}{\catl{D}}$ is a bijective on objects functor, then we say that $P(B) \in \catl{C}$ is a $J$-power of $B \in \catl{D}$ if there is a representation:
$$\hom_\catl{D}(J(-), B) \approx \hom_\catl{C}(-, P(B))$$
If a representation $P(B)$ exists for every $B \in \catl{D}$, i.e.~$J$ has the right adjoint $\mor{P}{\catl{D}}{\catl{C}}$, we say that $\catl{C}$ has $J$-powers.
\begin{example}[Topos]
Let $\catl{C}$ be a (locally small) regular category, and $\mor{J}{\catl{C}}{\catw{Rel}(\catl{C})}$ its inclusion functor into the category of relations. $\catl{C}$ is a topos iff it has $J$-powers.
\end{example}
\begin{example}[Quasitopos]
Let $\catl{C}$ be a finitely complete and cocomplete locally cartesian closed category, such that its fibration of regular subobjects\footnote{A regular subobject of $A$ is a (equivalence class of) regular monomorphism with codomain $A$. A regular monomorphism is a monomorphism that arises as an equaliser.} is regular\footnote{See Chapter 4, Section 4 of \cite{jacobs}}, and $\mor{J}{\catl{C}}{\catw{RegRel}(\catl{C})}$ its inclusion functor into the category of regular relations. $\catl{C}$ is a quasitopos iff it has $J$-powers.
\end{example}
\begin{example}[Regular fibration]
More generally, let $\mor{p}{\catl{E}}{\catl{C}}$ be a regular fibration on a finitely complete category $\catl{C}$. If $\mor{J}{\catl{C}}{\catw{Rel}(p)}$ has a right adjoint, then $\mor{p}{\catl{E}}{\catl{C}}$ has a generic object. The converse is true provided that $\catl{C}$ is cartesian closed. 
\end{example}
Still, as exposed in the introduction, such definition is too strong to embrace many interesting examples. Here is another one. Let $\cat$ be the 2-category of small categories, and $\catw{dist}$ the 2-category of distributors, with the usual bijective on objects embedding $\mor{J}{\cat}{\catw{dist}}$ defined on functors:
$$J(F) = \hom_{\catw{dist}}(\ph{1}, F(\ph{2}))$$
Then $\cat$ does not have $J$-powers due to the size issues --- distributors $\catl{A} \nrightarrow \catl{B}$ correspond to functors $\catl{A} \rightarrow \catw{Set}^{\catl{B}^{op}}$, but the category $\catw{Set}^{\catl{B}^{op}}$ usually is not small, nor even equivalent to a small one. Unfortunately, these size issues are fundamental --- there is no sensible restriction on the sizes of objects and morphism to make $\cat$ admit $J$-powers. However, \emph{some} of the distributors \emph{are} classified in such a way. These observations lead us to the concept of a Yoneda triangle.

\begin{definition}[Yoneda triangle]\label{d:yoneda:triangle}
Let $\catl{W}$ be a 2-category. A Yoneda triangle in $\catl{W}$, written $\yon{\eta}{y}{f}{g}$, consists of three morphisms $\mor{y}{A}{\overline{A}}$, $\mor{f}{A}{B}$ and $\mor{g}{B}{\overline{A}}$ together with a 2-morphism $\mor{\eta}{y}{g \circ f}$ which exhibits $g$ as a pointwise left Kan extension of $y$ along $f$, and exhibits $f$ as an absolute left Kan lifting\footnote{The concept of a Kan lifting is the \emph{opposite} of the concept of Kan extension --- i.e.~a Kan lifting in $\catl{W}$ is a Kan extension in $\catl{W}^{op}$.} of $y$ along $g$:
$$\bfig
\place(280, 400)[\twoar(-1, -1)\scriptstyle{\eta}]
\ptriangle(0, 0)/->`->`<-/<700, 500>[A`\overline{A}`B;y`f = \lif_g(y)`g = \lan_f(y)]
\efig$$
The absoluteness of a Kan lifting means that the lifting is preserved by any morphism $\mor{k}{K}{A}$ --- i.e.~the 2-morphism $\eta \circ k$ exhibits $\lif_g(y)\circ k$ as  $\lif_g(y \circ k)$.
\end{definition}
The idea of a Yoneda triangle is that, we have a morphism $\mor{y}{A}{\overline{A}}$ which plays the role of a ``defective identity'' and for a given morphism $\mor{f}{A}{B}$ we try to characterise its right adjoint up to the ``defective identity'' $y$.
\begin{example}[Adjuntion as Yoneda triangle]\label{e:adjunction:triangle}
A 1-morphism $\mor{f}{A}{B}$ in a 2-category $\catl{W}$ has a right adjoint $\mor{g}{B}{A}$ with unit $\mor{\eta}{\mathit{id}}{g \circ f}$ precisely when $\yon{\eta}{\mathit{id}}{f}{g}$ is a Yoneda triangle:
$$\bfig
\place(280, 400)[\twoar(-1, -1)\scriptstyle{\eta}]
\place(-130, 560)[\twoar(-1, -1)\scriptstyle{\epsilon}]
\node a1(0,500)[A]
\node a2(700,500)[A]
\node b2(0,1000)[B]
\node b(0,0)[B]
\arrow/->/[a1`a2;\mathit{id}]
\arrow|m|/->/[a1`b;f = \lif_g(\mathit{id})]
\arrow|r|/<-/[a2`b;g = \lan_f(\mathit{id})]
\arrow/->/[b2`a2;g]
\arrow/->/[b2`a1;g]
\arrow/{@{>}@/_3em/}/[b2`b;\mathit{id}]
\efig$$
Since $f = \lif_g(\mathit{id})$ is an absolute lifting, $f\circ g$ is a lifting of $g$ through $g$ with $\mor{\eta \circ g}{g}{g \circ f \circ g}$.  
By the universal property of the lifting, there is a unique 2-morphism $\mor{\epsilon}{f \circ g}{\mathit{id}}$ such that $g \circ \epsilon \bullet \eta \circ g = \mathit{id}$, which may be defined as the counit of the adjunction. We have to show that also the other triangle equality holds. Let us first postcompose the equation $g \circ \epsilon \bullet \eta \circ g = \mathit{id}$ with $f$ to obtain $g \circ \epsilon \circ f \bullet \eta \circ g \circ f = \mathit{id}$. Then postcompose it with $\eta$ to get $g \circ \epsilon \circ f \bullet \eta \circ g \circ f \bullet \eta = \eta$. But this equation, under bijection provided by $\lif_g(\id{})$, corresponds to the equation $\epsilon \circ f \bullet f \circ \eta = \id{}$, which is the required triangle equality.

On the other hand, let us assume that $f$ is left adjoint to $g$ with unit $\mor{\eta}{\id{}}{g \circ f}$ and counit $\mor{\epsilon}{f \circ g}{\id{}}$.
We shall see that for every $\mor{k}{C}{A}$ the composite $\eta \circ k$ exhibits $f \circ k$ as the left Kan lifting of $k$ along $g$:
$$\bfig
\place(260, 360)[\twoar(-1, -1)\scriptstyle{\eta \circ k}]
\place(-100, 300)[\twoar(-1, -1)\scriptstyle{\alpha}]
\node a1(0,500)[C]
\node a2(700,500)[A]
\node b(0,0)[B]
\arrow/->/[a1`a2;k]
\arrow|r|/->/[a1`b;f \circ k]
\arrow|r|/<-/[a2`b;g]
\arrow/{@{>}@/_2em/}/[a1`b;h]
\efig$$
We have to show that the assignments:
$$f \circ k \to^\alpha h \mapsto k \to^{g \circ \alpha \bullet \eta \circ k} g \circ h$$
and:
$$ k \to^{\beta} g \circ h \mapsto f \circ k \to^{\epsilon \circ h \bullet f \circ \beta} h$$
are inverse of each other. Let us check the first composition:
\begin{eqnarray*}
\epsilon \circ h \bullet f \circ (g \circ \alpha \bullet \eta \circ k) & = & \epsilon \circ h \bullet f \circ g \circ \alpha \bullet f \circ \eta \circ k\\
 &=& \alpha \bullet \epsilon \circ f \circ k \bullet f \circ \eta \circ k\\
&=& \alpha
\end{eqnarray*}
where the first and second equalities follow from the interchange law of a 2-category, and the last one follows by the triangle equation. Similarly we may check the second composition:
\begin{eqnarray*}
g \circ (\epsilon \circ h \bullet f \circ \beta) \bullet \eta \circ k &=& g \circ \epsilon \circ h \bullet g \circ f \circ \beta \bullet \eta \circ k \\
&=& g \circ \epsilon \circ h \bullet \eta \circ g \circ h \bullet \beta \\
&=& \beta
\end{eqnarray*}

The fact that $g$ is a pointwise left extension of $\id{}$ along $f$ follows from a more general observation that a left Kan extension along a left adjoint always exists and is pointwise (Proposition 20 in \cite{yonedaincat}). However, it is illustrative to see how the bijections defining Kan extensions are constructed in our particular case. Let us extend the diagram of adjunction $f \dashv g$ by generalised elements ${a_X \in A}$, ${b_X \in B}$:
$$\bfig
\node x(0,0)[X]
\node a(1000,500)[A]
\pullback(0, 0)[\comma{f}{b}`A`X`B;\pi_1`\pi_2`f`b_X]/{@{>}@/^1em/}`-->`{@{>}@/_1em/}/[X;g(b_X)`\exists ! h`\id{}]
\place(250, 250)[\twoar(-1, -1)\scriptstyle{\pi}]
\place(700, 330)[\twoar(-1, -1)\scriptstyle{\eta}]
\place(0, 690)[\twoar(-1, -1)\scriptstyle{\epsilon_b}]
\place(700, 0)[\twoar(-1, -1)\scriptstyle{\beta}]
\ptriangle(500, 0)/->``<-/<500, 500>[A`A`B;\id{}``g]
\arrow|m|/{@{>}@/_4em/}/[x`a;a_X]
\efig$$
where $\mor{h}{X}{\comma{f}{b}}$ is the unique morphism to the comma object induced by the counit $\mor{\epsilon_b}{f(g(b_X))}{b_X}$. Then, one part of the bijective correspondence is given by assigning to $\mor{\beta}{g(b_X)}{a_X}$ an arrow $\mor{\beta \circ \pi_2 \bullet g \circ \pi \bullet \eta \circ \pi_1}{\pi_1}{a_X \circ \pi_2}$, and the other is given by composition with $h$.
\end{example}
Generally, a Yoneda-like triangle $\yon{\eta}{y}{f}{g}$ where $g$ is not assumed to be the left Kan extension of $y$ along $f$ is called an adjunction relative to $y$ \cite{relativeadjunction}. Note however, that in such a case $g$ need not be uniquely determined by $f$.

Just like in \cite{log1} we provided an elementary description of pointwise Kan extensions, we shall now give a similar characterisation of absolute Kan liftings. Let us extend the diagram of a Yoneda triangle $\yon{\eta}{y}{f}{g}$, by taking generalised elements $a_X \in A, b_X \in B$ and a generalised arrow $f(a_X) \to^k b_X$: 
$$\bfig
\place(280, 400)[\twoar(-1, -1)\scriptstyle{\eta}]
\place(-130, 560)[\twoar(-1, -1)\scriptstyle{k}]
\node a1(0,500)[A]
\node a2(700,500)[\overline{A}]
\node b2(0,1000)[X]
\node b(0,0)[B]
\arrow/->/[a1`a2;y]
\arrow|r|/->/[a1`b;f]
\arrow|r|/<-/[a2`b;g]
\arrow/->/[b2`a2;y(a_X)]
\arrow/->/[b2`a1;a_X]
\arrow/{@{>}@/_3em/}/[b2`b;b_X]
\efig$$
The absoluteness of a left Kan lifting says that there is a bijective correspondence:
\begin{center}
\begin{tabular}{ccc}
$f(a_X)$ & $\overset{k}\rightarrow$ & $b_X$\\
\hline\hline
$y(a_X)$ & $\overset{\eta_a \circ k}\rightarrow$ & $g(b_X)$\\
\end{tabular}
\end{center}
which clearly reassembles the usual $\hom$-definition of adjunction on generalised elements. Moreover, using the formula for pointwise left Kan extension, we may write:
$$g(b_X) = \underset{f(a_X) \rightarrow b_X}\coprod y(a_X)$$
which in case of $\onecat$ may be interpreted as the colimit of $y$ taken over comma category $\comma{f}{\Delta_{b_X}}$. 
In particular, we have the following characterisation of Yoneda triangles in $\onecat$.
\begin{example}[Yoneda triangles in $\onecat$]
If we take $\catl{W}$ to be the 2-category $\catw{Cat}$ of locally small categories, functors and natural transformations, then the condition that $G$ is a pointwise left Kan extension of $\mor{Y}{\catl{A}}{\overline{\catl{A}}}$ along $\mor{F}{\catl{A}}{\catl{B}}$ reduces to:
$$G(\ph{}) = \int^{A\in\catl{A}} \hom_\catl{B}(F(A), \ph{}) \times Y(A)$$
In case category $\overline{\catl{A}}$ is not tensored over $\catw{Set}$ the above coend has to be interpreted as the colimit of $Y$ weighted by:
$${\hom_\catl{B}(F(\ph{1}), \ph{2})}$$
The condition that $F$ is an absolute left Kan lifting of $Y$ along $G$ reduces to:
$$\hom_\catl{B}(F(\ph{1}), \ph{2}) \approx \hom_{\overline{\catl{A}}}(Y(\ph{1}), G(\ph{2}))$$

Furthermore, if $Y$ is dense, then $G$ is automatically a pointwise Kan extension in a canonical way --- from density we have:
$$G(\ph{}) \approx \int^{A\in\catl{A}} \hom(Y(A), G(\ph{})) \times Y(A)$$
and using the formula for an absolute lifting: 
$$G(\ph{}) \approx \int^{A\in\catl{A}} \hom(F(A), \ph{}) \times Y(A)$$
\end{example}
This example needs more elaboration. In the literature, there exist two essentially different notions of pointwise Kan extensions. The older, provided by Eduardo Dubuc \cite{dubuckan} for enriched categories, defines pointwise Kan extensions as appropriate enriched (co)ends:
\begin{eqnarray*}
\ran_F(Y) &=& \int_{A \in \catl{A}} Y(A)^{\hom_\catl{B}(\ph{}, F(C))} \\
\lan_F(Y) &=& \int^{A \in \catl{A}} \hom_\catl{B}(F(C), \ph{}) \otimes Y(A)
\end{eqnarray*}
The newer, provided by Ross Street \cite{yonedaincat}, works in the general context of (sufficiently complete) 2-categories, and has been used in the paper up to this point. As mentioned in \cite{yonedaincat} \cite{kelly} these definitions agree for categories enriched in $\catw{Set}$, and for categories enriched in the 2-valued Boolean algebra $2$, but Street's definition is stronger than Dubuc's one for general enriched categories (it is strictly stronger for categories enriched in abelian groups $\catw{Ab}$, and for categories enriched in $\catw{Cat}$). Steve Lack \cite{catcomp} blamed for this mismatch the definition of a category of $\catl{C}$-enriched categories, which ``can't see'' the extra structure of a $\catl{C}$-enriched category on functor categories $\hom(A, B)$. Although it is certainly true that the category $\catw{Cat}(\catl{C})$ of $\catl{C}$-enriched categories is more than a 2-category --- after all, it is a $\catw{Cat}(\catl{C})$-enriched category with an underlying 2-category --- the reasoning is not correct. Technically, the reasoning cannot be right, because treating a 2-category  as a $\catw{Cat}(\catl{C})$-enriched category and carrying to this setting Street's definition of pointwise Kan extension may only strengthen the concept of a Kan extension, which is, actually, in its ordinary 2-categorical form, stronger than Dubuc's one. More importantly, also philosophically the reasoning cannot be right --- the enrichment of $\catw{Cat}(\catl{C})$ in $\catw{Cat}(\catl{C})$ is a self-enrichment, which means that it is completely recoverable from its underlying 2-category; the idea behind Street's pointwise Kan extensions was to define the Kan extension at ``every generalised 2-point'' just to evade defining it on ``enriched objects''.

\begin{example}[Yoneda triangle along Yoneda embedding]\label{e:yoneda:along:yoneda}
For any functor $\mor{F}{\catl{A}}{\catl{B}}$ between locally small categories, there is a Yoneda triangle:
$$\bfig
\place(280, 400)[\twoar(-1, -1)\scriptstyle{\eta}]
\ptriangle(0, 0)/->`->`<-/<700, 500>[\catl{A}`\catw{Set}^{\catl{A}^{op}}`\catl{B};y_\catl{A}`F`\hom_\catl{B}(F(\ph{2}),\ph{1})]
\efig$$
which reassembles the fact that every functor always has a ``distributional'' right adjoint\footnote{Every functor has a right adjoint in the weak 2-category of distributors.}. The same is true for internal categories and for categories enriched in a cocomplete symmetric monoidal closed category, and generally (almost by definition) for any 2-category equipped with a Yoneda structure in the sense of \cite{yoneda}.
\end{example}
The essence of the example is that because the Yoneda functor $\mor{y_\catl{B}}{\catl{B}}{\catw{Set}^{\catl{B}^{op}}}$ is a full and faithful embedding, functors $\mor{F}{\catl{A}}{\catl{B}}$ may be thought as of distributors 
$$y_\catl{B} \circ F = \hom_\catl{B}(\ph{2}, F(\ph{1}))$$
Every distributor arisen in this way has a right adjoint distributor ${\hom_\catl{B}(F(\ph{2}), \ph{1})}$ in the (weak) 2-category of distributors. The distributor ${\hom_\catl{B}(F(\ph{2}), \ph{1})}$ actually has type $\catl{B} \rightarrow \catw{Set}^{\catl{A}^{op}}$, which is the only reason that may prevent $F$ from having the ordinary (functorial) right adjoint $\mor{G}{\catl{B}}{\catl{A}}$. Formally, we say that $F$ has a right adjoint, if there exists $G$ such that:
$$y_\catl{A} \circ G \approx \hom_\catl{B}(F(\ph{2}), \ph{1})$$
which means:
$$\hom_\catl{A}(\ph{2}, G(\ph{1})) \approx \hom_\catl{B}(F(\ph{2}), \ph{1})$$

Of course, a Yoneda 2-triangle is a Yoneda triangle in the (2-)category of 2-categories, 2-functors, and 2-natural transformations. However, in the light of our elaboration on ``pointwiseness'', we shall weaken the definition of pointwise Kan extension to the one suitable for enriched categories --- as it is much easier and convenient to work with. 
\begin{definition}[Yoneda bitriangle]\label{d:yoneda:bitriangle}
A Yoneda bitriangle $\yon{\eta}{Y}{F}{G}$ consists of pseudofunctors $\mor{Y}{\catl{A}}{\overline{\catl{A}}}$, $\mor{F}{\catl{A}}{\catl{B}}$, $\mor{G}{\catl{B}}{\overline{\catl{A}}}$ between (weak) 2-categories $\catl{A}, \overline{\catl{A}}, \catl{B}$ and a pseudonatural transformation $\mor{\eta}{Y}{G \circ F}$ that induces natural equivalences between functors:
$$\hom_{\overline{\catl{A}}}(Y(\ph{2}), G(\ph{1})) \approx \hom_\catl{B}(F(\ph{2}), \ph{1})$$
and between functors:
$$\hom_{\overline{\catl{A}}}(G(\ph{1}), \ph{2}) \approx \hom(\hom_\catl{B}(F(\ph{3}), \ph{1}), \hom_{\overline{\catl{A}}}(Y(\ph{3}), \ph{2}))$$
\end{definition}
The last equivalence should be informally understood as the following ``equivalence''\footnote{It may be expressed as such an equivalence in case objects on the right hand side are well-defined.}:
$$G(\ph{}) \approx \int^{A\in\catl{A}} \hom_\catl{B}(F(A), \ph{}) \times Y(A)$$
Observe that even in case $\mor{Y}{\catl{A}}{\overline{\catl{A}}}$ is only weakly 2-dense, i.e.~there is a canonical equivalence:
$$\hom(\hom_{\overline{\catl{A}}}(Y(\ph{1}), \ph{2}), \hom_{\overline{\catl{A}}}(Y(\ph{1}), \ph{3})) \approx \hom_{\overline{\catl{A}}}(\ph{2}, \ph{3})$$
if $G$ satisfies the first condition, then it automatically satisfies the second as well:
\begin{eqnarray*}
&&\hom(\hom_\catl{B}(F(\ph{3}), \ph{1}), \hom_{\overline{\catl{A}}}(Y(\ph{3}), \ph{2}))\\
& \approx &\hom(\hom_{\overline{\catl{A}}}(Y(\ph{3}), G(\ph{1})), \hom_{\overline{\catl{A}}}(Y(\ph{3}), \ph{2}))\\
& \approx &\hom_{\overline{\catl{A}}}(G(\ph{1}), \ph{2})\\
\end{eqnarray*}
 
We shall be mostly interested in Yoneda bitriangles arisen from proarrow equipment \cite{proarrows} \cite{proarrows2}, which we recall below. Let $\mor{J}{\catl{A}}{\catl{B}}$ be a (weak) 2-functor from a (strict) 2-category $\catl{A}$ to a (weak) 2-category  $\catl{B}$. We say that $J$ equips $\catl{A}$ with proarrows if the following holds:
\begin{itemize}
\item $J$ is bijective on objects
\item $J$ is locally fully faithful, which means that for every pair of objects $X, Y \in \catl{A}$ the induced functor $\hom_\catl{A}(X,Y) \rightarrow \hom_\catl{B}(J(X), J(Y))$ is fully faithful
\item for every 1-morphism $\mor{f}{X}{Y}$ in $\catl{A}$ the corresponding morphism $\mor{J(f)}{J(X)}{J(Y)}$ in $\catl{B}$ has a right adjoint
\end{itemize}
A proarrow equipment reassembles the concept of an allegory in a 2-dimensional context.
\begin{definition}[2-powers]\label{d:2:powers}
Let $\mor{J}{\catl{A}}{\catl{B}}$ be an equipment of $\catl{A}$ with proarrows, and $\mor{Y}{\catl{A}}{\overline{\catl{A}}}$ a 2-functor making $\catl{A}$ a full 2-subcategory of $\overline{\catl{A}}$. Then $\catl{A}$ has $J$-relative 2-powers if $J$ and $Y$ can be completed to a Yoneda bitriangle $\yon{\eta}{Y}{J}{P}$ with $\mor{P}{\catl{B}}{\overline{\catl{A}}}$ and $\mor{\eta}{Y}{P \circ J}$.
\end{definition}  
\begin{example}[Categorical 2-powers]\label{e:cat:triangle}
The archetypical situation is when we take $\yon{\eta}{\mor{Y}{\catw{cat}}{\catw{Cat}}}{\mor{J}{\catw{cat}}{\catw{dist}}}{\mor{P}{\catw{dist}}{\catw{Cat}}}$, where $\catw{cat}$ is the 2-category of small categories, $\catw{Cat}$ is the 2-category of locally small categories, and $\catw{dist}$ is the (weak) 2-category of distributors between small categories. Then $J \colon \catw{cat} \rightarrow \catw{dist}$, $Y \colon \catw{cat} \rightarrow \catw{Cat}$ are the usual embeddings, $P \colon \catw{dist} \rightarrow \catw{Cat}$ is the covariant 2-power pseudofunctor $\catw{Set}^{(-)^{op}}$ defined on distributors via left Kan extensions, and $\eta_\catl{A} \colon \catl{A} \rightarrow \catw{Set}^{\catl{A}^{op}}$ is the Yoneda embedding of a small category $\catl{A}$.
There are isomorphisms of categories:
$$\hom_{\catw{dist}}(\catl{A}, \catl{B}) \approx \hom_{\catw{Cat}}(\catl{A}, \catw{Set}^{\catl{B}^{op}})$$
where $\catl{A}$ and $\catl{B}$ are small. Therefore, to show that $P$ is a (weak) pointwise left Kan extension it suffices to show that $Y$ is 2-dense. However, $Y$ is obviously 2-dense, because the terminal category $1$ is a 2-dense subcategory of $\catw{Cat}$ and $Y$ is fully faithful.
\end{example}
Here is a similar result for internal categories.

\begin{theorem}[$\catl{C}$-internal 2-powers]\label{e:cat:internal:triangle}
Let $\catl{C}$ be a finitely cocomplete locally cartesian closed category. There is a Yoneda bitriangle: $$\yon{\eta}{\mor{\mathit{fam}}{\catw{cat}(\catl{C})}{\catw{Cat}^{\op{\catl{C}}}}}{\mor{J}{\catw{cat}(\catl{C})}{\catw{dist}(\catl{C})}}{\mor{P}{\catw{dist}(\catl{C})}{\catw{Cat}^{\op{\catl{C}}}}}$$
where $\catw{cat}(\catl{C})$ is the 2-category of $\catl{C}$-internal categories, $\catw{dist}(\catl{C})$ is the (weak) 2-category of $\catl{C}$-internal distributors with $J$ the usual embedding, and:
$$\mor{\word{fam}}{\catw{cat}(\catl{C})}{\catw{Cat}^{\op{\catl{C}}}}$$ is the  
canonical family functor (the externalisation functor). Pseudofunctor: $$\mor{P}{\catw{dist}(\catl{C})}{\catw{Cat}^{\op{\catl{C}}}}$$ is given by:
\begin{eqnarray*}
P(A) &=& \word{fam}(\catl{C})^{\op{\word{fam}(A)}}\\
P(A \overset{F}\nrightarrow B) &=& \lan_{y_A}(F) 
\end{eqnarray*}
where $\word{fam}(\catl{C})$ is a split indexed category corresponding to the fundamental (i.e.~codomain) fibration, and:
$$\mor{y_A}{\word{fam}(A)}{\word{fam}(\catl{C})^{\op{\word{fam}(A)}}}$$
is the usual internal Yoneda embedding defined as the cartesian transposition of:
$$\mor{\underline{\hom(\ph{2},\ph{1})}}{\word{fam}(A) \times \op{\word{fam}(A)}}{\word{fam}(\catl{C})}$$
\end{theorem}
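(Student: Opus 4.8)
The plan is to verify the two defining equivalences of Definition~\ref{d:yoneda:bitriangle} for the triple $(\word{fam}, J, P)$, with $\eta_A = y_A$ the internal Yoneda embedding (whose pseudonaturality in $A$ is routine). I would establish the first (``relative right adjoint'') equivalence directly, using that externalisation computes internal diagrams, and then deduce the second from weak 2-density of $\word{fam}$, exactly as in the remark following Definition~\ref{d:yoneda:bitriangle} and as in Example~\ref{e:cat:triangle}. Throughout I would use the standard properties of the externalisation 2-functor $\mor{\word{fam}}{\catw{cat}(\catl{C})}{\catw{Cat}^{\op{\catl{C}}}}$: it preserves finite products, sends $\op{(-)}$ to the fibrewise opposite, and is 2-fully faithful; moreover $\catw{Cat}^{\op{\catl{C}}}$ carries the exponentials by (opposites of) externalisations, so that $P(B) = \word{fam}(\catl{C})^{\op{\word{fam}(B)}}$ is well defined and is the indexed category of internal presheaves on $B$.

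Next I would check that $P$ is a genuine pseudofunctor. The indexed category $\word{fam}(\catl{C})$ of the codomain fibration is internally cocomplete: local cartesian closedness makes each reindexing functor a right adjoint, so reindexing preserves fibrewise colimits; finite colimits in $\catl{C}$ give fibrewise finite colimits; and the $\Sigma$-functors give $\catl{C}$-indexed coproducts --- from these one assembles the colimit of an arbitrary internal diagram. Hence each $P(B)$ is internally cocomplete too, which is exactly what makes the left Kan extension $P(A \overset{F}{\nrightarrow} B) = \lan_{y_A}(F)$ along the internal Yoneda embedding exist; its pseudofunctoriality $P(G \circ F) \approx P(G) \circ P(F)$ --- the internal analogue of the functoriality of Day convolution --- then follows from the Kan-extension calculus together with the density of $y_A$, which makes $\lan_{y_A}$ cocontinuous and ``coinduced''. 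This is the step in which finite cocompleteness of $\catl{C}$ is genuinely used.

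For the first equivalence the crucial lemma is that externalisation computes internal diagrams: for every internal category $\mathbb{D}$ there is an equivalence $\hom_{\catw{Cat}^{\op{\catl{C}}}}(\word{fam}(\mathbb{D}), \word{fam}(\catl{C})) \approx \catl{C}^{\mathbb{D}}$ between $\catl{C}$-indexed functors into the codomain indexed category and internal diagrams of shape $\mathbb{D}$ (equivalently, internal discrete opfibrations over $\mathbb{D}$; this is the indexed Grothendieck construction). Transposing across the exponential and using that $\word{fam}$ preserves $\times$ and $\op{(-)}$ gives
$$\hom_{\catw{Cat}^{\op{\catl{C}}}}(\word{fam}(A), P(B)) \approx \hom_{\catw{Cat}^{\op{\catl{C}}}}(\word{fam}(\op{B} \times A), \word{fam}(\catl{C})) \approx \catl{C}^{\op{B} \times A} = \hom_{\catw{dist}(\catl{C})}(J(A), B),$$
naturally in $A$ and $B$, the last identification being the definition of an internal distributor $A \nrightarrow B$. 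That this equivalence is the one \emph{induced} by $\eta$, i.e.\ that it is $\psi \mapsto P(\psi) \circ \eta_A$, follows because the internal Yoneda embedding is fully faithful, so $\lan_{y_A}(\phi) \circ y_A \approx \phi$; thus $P(\psi) \circ \eta_A = \lan_{y_A}(\widetilde{\psi}) \circ y_A \approx \widetilde{\psi}$, exactly the transpose of $\psi$ along the displayed chain.

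Finally I would obtain the second equivalence by showing $\word{fam}$ is 2-dense --- a fortiori weakly 2-dense --- and appealing to the remark after Definition~\ref{d:yoneda:bitriangle}. Since externalisation is 2-fully faithful, $\catw{cat}(\catl{C})$ is a full sub-2-category of $\catw{Cat}^{\op{\catl{C}}}$, so it suffices to exhibit inside it a 2-dense sub-2-category; the discrete internal categories $\disc(I)$, $I \in \catl{C}$, serve, because $\word{fam}(\disc(I))$ is the indexed category $K \mapsto \disc(\hom_{\catl{C}}(K, I))$ --- the representable $\catw{Cat}$-valued presheaf at $I$ --- so that $\hom_{\catw{Cat}^{\op{\catl{C}}}}(\word{fam}(\disc(I)), \mathcal{E}) \approx \mathcal{E}(I)$ by the $\catw{Cat}$-enriched Yoneda lemma, whence the $2$-nerve of $\catw{Cat}^{\op{\catl{C}}}$ along these objects is, up to equivalence, the identity 2-functor. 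A full sub-2-category containing a 2-dense one is 2-dense, so $\word{fam}$ is 2-dense, which closes the argument. I expect the real obstacles to be, first, the ``externalisation computes internal diagrams'' lemma --- where the compatibility of the exponential $\word{fam}(\catl{C})^{\op{\word{fam}(B)}}$ with the internal presheaf construction, and of $y_A$ with the internal Yoneda embedding, must be pinned down carefully --- and, second, the internal convolution theorem making $P$ a well-defined pseudofunctor; these are the points where ``locally cartesian closed with finite colimits'' does the real work, whereas 2-density of $\word{fam}$ is the straightforward internal analogue of ``$1$ is 2-dense in $\catw{Cat}$''.
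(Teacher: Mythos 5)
Your proposal is correct and follows essentially the same route as the paper's proof: the key hom-equivalence $\hom_{\catw{dist}(\catl{C})}(A,B) \approx \hom_{\catw{Cat}^{\op{\catl{C}}}}(\word{fam}(A), \word{fam}(\catl{C})^{\op{\word{fam}(B)}})$ (which the paper simply cites from Betti--Carboni--Street--Walters and Street, while you sketch its derivation), followed by 2-density of $\word{fam}$ deduced from the 2-Yoneda lemma applied to the discrete internal categories, which form a full sub-2-category. The extra detail you supply on pseudofunctoriality of $P$ and on compatibility of the equivalence with $\eta$ only fills in steps the paper leaves implicit.
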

\begin{proof}
By universal properties of Kan extensions, $P$ is a pseudofunctor $\catw{dist}(\catl{C}) \to \catw{Cat}^{\op{\catl{C}}}$. There is an equivalence of categories\footnote{In fact this equivalence is almost a definition of the category $\catw{dist}(\catl{C})$.} (Section 4 of \cite{variations}, Section 3 of \cite{cohomology}):
$$\hom_{\catw{dist}(\catl{C})}(A, B) \approx \hom_{\catw{Cat}^\op{\catl{C}}}(\word{fam}(A), \word{fam}(\catl{C})^{\op{\word{fam}(B)}})$$
To show that $P$ is a (pointwise) left Kan extension it suffices to show that $\mathit{fam}$ is 2-dense. However, $\mathit{fam}$ on discrete internal categories is clearly 2-dense by (weak) 2-Yoneda lemma, and discrete internal categories form a full 2-subcategory of all categories. Therefore $\mathit{fam}$ is 2-dense.
\end{proof}

It requires much more work to obtain analogical result for enriched categories. The difficulty is of the same kind as we encountered earlier --- discrete objects in the category of enriched categories are generally not dense (more --- they rarely constitute a generating family) and there is no canonical candidate for any subcategory giving a dense notion of discreteness.
First, let us observe that every enriched category is a canonical limit over its full subcategories consisting of at most three objects.

\begin{lemma}[On a 2-dense subcategory of $\catw{Cat}(\catl{C})$]\label{l:dense:enriched}
Let $\tuple{I, \otimes, \catl{C}}$ be a complete and cocomplete symmetric monoidal closed category. The category of small $\catl{C}$-enriched categories is a 2-dense subcategory of all $\catl{C}$-enriched categories.
\end{lemma}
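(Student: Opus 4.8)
The plan is to establish $2$-density of the inclusion $\mor{J}{\catw{cat}(\catl{C})}{\catw{Cat}(\catl{C})}$ of the $2$-category of small $\catl{C}$-enriched categories by showing that the restricted nerve
$$\mor{N}{\catw{Cat}(\catl{C})}{[\op{\catw{cat}(\catl{C})}, \onecat]}, \qquad \catl{B} \longmapsto \hom_{\catw{Cat}(\catl{C})}(J(-), \catl{B})$$
is $2$-fully faithful, i.e.\ that for all $\catl{B}, \catl{B}'$ the evident comparison from $\hom_{\catw{Cat}(\catl{C})}(\catl{B}, \catl{B}')$ to the category of $2$-natural transformations $N\catl{B} \Rightarrow N\catl{B}'$ is an equivalence; this is the exact content of the slogan that every $\catl{C}$-enriched category is reconstructed canonically from its full sub-enriched-categories on at most three objects. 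I would probe $N$ against three families of especially small objects, all of them free $\catl{C}$-enriched categories on $\catl{C}$-graphs: the unit $\catl{C}$-category $\mathbb{1}$; for each $X \in \catl{C}$ the ``free arrow'' $\mathbb{2}_X$ --- objects $a, b$, with $a \to b$ hom-object $X$, endo-hom-objects $I$, and $b \to a$ hom-object the initial object $0$ (available since $\catl{C}$ is cocomplete, composition being well defined because $0 \otimes X \approx 0$, as $- \otimes X$ is a left adjoint); and for each $X, Y \in \catl{C}$ the ``free composable pair'' $\mathbb{3}_{X,Y}$ on $a \to b \to c$ with middle hom-objects $X, Y$ and $a \to c$ hom-object $Y \otimes X$. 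Writing $\mathcal{G}$ for the full sub-$2$-category they span (a large family, which is harmless), and using that a sub-$2$-category containing a $2$-dense one is again $2$-dense, it is enough to prove that $\catw{Cat}(\catl{C}) \to [\op{\mathcal{G}}, \onecat]$ is $2$-fully faithful.

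Local faithfulness and injectivity on objects are immediate from the universal properties: $\hom_{\catw{Cat}(\catl{C})}(\mathbb{1}, \catl{B})$ is the set of objects of $\catl{B}$, and $\hom_{\catw{Cat}(\catl{C})}(\mathbb{2}_X, \catl{B})$ classifies triples $(p, q, \, X \to \hom_\catl{B}(p,q))$. Hence an enriched functor $\mor{F}{\catl{B}}{\catl{B}'}$, which amounts to its object assignment together with the maps $\mor{F_{p,q}}{\hom_\catl{B}(p,q)}{\hom_{\catl{B}'}(Fp, Fq)}$, is recovered from $N(F)$ at $\mathbb{1}$ and at $\mathbb{2}_{\hom_\catl{B}(p,q)}$ (evaluated on the object $\id{\hom_\catl{B}(p,q)}$); and an enriched natural transformation, being its family of components $I \to \hom_{\catl{B}'}(Fp, Gp)$, is recovered from the corresponding functor at $\mathbb{1}$. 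The same remark shows that any $2$-natural $N(\catl{B}) \Rightarrow N(\catl{B}')$ is already determined by its components at $\mathbb{1}$ and at the $\mathbb{2}_X$, because naturality against the maps $\mathbb{1} \to \catl{D}$ and $\mathbb{2}_X \to \catl{D}$ pins down its value at an arbitrary small $\catl{D}$.

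The crux, which I expect to be the main obstacle, is local fullness. Given a $2$-natural $\mor{\theta}{N(\catl{B})}{N(\catl{B}')}$, I would set $F := \theta_\mathbb{1}$ on objects and define $F_{p,q}$ by applying $\theta_{\mathbb{2}_X}$, with $X = \hom_\catl{B}(p,q)$, to the object $\id{X}$; naturality of $\theta$ along the two inclusions $\mathbb{1} \to \mathbb{2}_X$ forces the result to be classified by a map $\hom_\catl{B}(p,q) \to \hom_{\catl{B}'}(Fp, Fq)$, which is $F_{p,q}$. It then remains to check that $F$ is a genuine $\catl{C}$-functor: unit preservation follows from naturality of $\theta$ along the inclusion $\mathbb{1} \to \mathbb{2}_I$ picking out a unit, and composition preservation follows from naturality along the two inclusions $\mathbb{2}_X, \mathbb{2}_Y \to \mathbb{3}_{X,Y}$ together with the map $\mathbb{2}_{Y \otimes X} \to \mathbb{3}_{X,Y}$ classified by the composite edge --- this diagram chase is precisely the classical proof that the $2$-truncated nerve $\catw{Cat} \to [\op{(\Delta_{\le 2})}, \catw{Set}]$ is full, transcribed internally to $\catl{C}$; associativity and the unit coherences of $F$ are equations in $\catl{B}'$ and require no separate check. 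Finally $N(F) \approx \theta$: the two $2$-natural transformations agree at $\mathbb{1}$ and at every $\mathbb{2}_X$ by construction, hence everywhere by the determination remark above. The parallel (slightly longer) bookkeeping on $2$-cells yields local fullness there as well.

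Putting the pieces together, $N$ is $2$-fully faithful, so $\mathcal{G}$ --- and therefore the whole $2$-category of small $\catl{C}$-enriched categories --- is $2$-dense in $\catw{Cat}(\catl{C})$. The one genuinely delicate point is the verification that the reconstructed $F$ preserves composition, which is the $\catl{C}$-enriched avatar of the elementary fact that an ordinary category is determined by its objects, morphisms and composable pairs; everything else is bookkeeping with the free-enriched-category presentations. The hypotheses on $\catl{C}$ enter only there: cocompleteness furnishes the initial object $0$ and, through closedness, the cocontinuity of $- \otimes X$ needed for $\mathbb{2}_X$ and $\mathbb{3}_{X,Y}$ to be legitimate $\catl{C}$-categories, while completeness is used, as throughout, to keep $\catw{Cat}(\catl{C})$ well behaved.
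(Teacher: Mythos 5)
Your proof is correct, but it takes a genuinely different route from the paper's. The paper first reduces $2$-density to $1$-density by a cotensor argument (replacing $B$ by $X \pitchfork B$ and transposing), and then shows that each enriched category $A$ is the canonical colimit of the diagram of its \emph{own} full subcategories on at most three objects, so that a natural transformation of restricted nerves yields a cocone under $B$ and hence a classifying morphism $\mor{c}{A}{B}$. You instead keep the statement two-dimensional throughout and probe the nerve against \emph{free} $\catl{C}$-categories on generic cells --- the unit category, the free arrow on $X$, and the free composable pair on $X, Y$ --- reconstructing the enriched functor datum by datum (object part, hom-maps, preservation of units and composition) purely from naturality, and then invoking the fact that enlarging a dense subcategory preserves density. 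Your approach buys a much more economical density class and is closer to the classical statement that a category is determined by its $2$-truncated nerve; its costs are that the $2$-cell bookkeeping (modifications, and the identification of $\theta$ at the unit category with the underlying functor of the reconstructed $F$, which does use $2$-naturality against the generic $2$-cell of the free arrow on $I$) must be done by hand, where the paper's cotensor trick disposes of the second dimension in one stroke, and that your test objects need the initial object $0$ with $0 \otimes X \approx 0$, i.e.\ they use cocompleteness and closedness where the paper's full subcategories do not --- though both are covered by the hypotheses. The enlargement step you use is legitimate: a dense subcategory is in particular generating, and jointly reflecting equality of ($1$- and $2$-) cells is exactly what is needed to promote fullness over the smaller class to fullness over the larger one.
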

\begin{proof}
We have to show that the following categories of natural transformations are isomorphic in a canonical way for all $\catl{C}$-enriched categories $A, B \in \catw{Cat}(\catl{C})$:
\begin{center}
\begin{tabular}{p{\linewidth}}
$\hom_{\onecat^{\op{\catw{Cat}(\catl{C})}}}(\hom_{\catw{Cat}(\catl{C})}(-, A), \hom_{\catw{Cat}(\catl{C})}(-, B))\approx$\\
\multicolumn{1}{r}{$\hom_{\onecat^{\op{\catw{Cat}_S(\catl{C})}}}(\hom_{\catw{Cat}(\catl{C})}(Y(-), A), \hom_{\catw{Cat}(\catl{C})}(Y(-), B))$}
\end{tabular}
\end{center}
where $\mor{Y}{\catw{Cat}_S(\catl{C})}{\catw{Cat}(\catl{C})}$ is the embedding of small categories $\catw{Cat}_S(\catl{C})$ into all (locally small) categories $\catw{Cat}(\catl{C})$. To simplify the proof, let us observe that it suffices to show that the underlying sets of the above natural transformation objects are naturally bijective (i.e.~that $\catw{Cat}_S(\catl{C})$ is 1-dense in $\catw{Cat}(\catl{C})$). Because $\catw{Cat}(\catl{C})$ is cotensored over small categories, we have natural in $X \in \cat$ bijections:
\begin{center}
\begin{tabular}{p{\linewidth}}
$\hom_{\onecat^{\op{\catw{Cat}(\catl{C})}}}(\hom_{\catw{Cat}(\catl{C})}(-, A), \hom_{\catw{Cat}(\catl{C})}(-, X \pitchfork B)) \approx $\\
$\hom_{\onecat^{\op{\catw{Cat}(\catl{C})}}}(\hom_{\catw{Cat}(\catl{C})}(-, A), \hom_{\catw{Cat}(\catl{C})}(-, B)^X) \approx$ \\
\multicolumn{1}{r}{$\hom_\onecat(X, \hom_{\onecat^{\op{\catw{Cat}(\catl{C})}}}(\hom_{\catw{Cat}(\catl{C})}(-, A), \hom_{\catw{Cat}(\catl{C})}(-, B)))$}
\end{tabular}
\end{center}
and similarly --- because $\catw{Cat}_S(\catl{C})$ is cotensored over small categories:
\begin{center}
\begin{tabular}{p{\linewidth}}
$\hom_{\onecat^{\op{\catw{Cat}_S(\catl{C})}}}(\hom_{\catw{Cat}(\catl{C})}(Y(-), A), \hom_{\catw{Cat}(\catl{C})}(Y(-), X \pitchfork B)) \approx $\\
$\hom_{\onecat^{\op{\catw{Cat}_S(\catl{C})}}}(\hom_{\catw{Cat}(\catl{C})}(Y(-), A), \hom_{\catw{Cat}(\catl{C})}(Y(-), B)^X) \approx$ \\
\multicolumn{1}{r}{$\hom_\onecat(X, \hom_{\onecat^{\op{\catw{Cat}_S(\catl{C})}}}(\hom_{\catw{Cat}(\catl{C})}(Y(-), A), \hom_{\catw{Cat}(\catl{C})}(Y(-), B)))$}
\end{tabular}
\end{center}
By the usual argument, categories:
$$\hom_{\onecat^{\op{\catw{Cat}(\catl{C})}}}(\hom_{\catw{Cat}(\catl{C})}(-, A), \hom_{\catw{Cat}(\catl{C})}(-, B))$$
and:
$$\hom_{\onecat^{\op{\catw{Cat}_S(\catl{C})}}}(\hom_{\catw{Cat}(\catl{C})}(Y(-), A), \hom_{\catw{Cat}(\catl{C})}(Y(-), B))$$
are isomorphic iff the sets:
$$\hom_{\catw{SET}}(X, \hom_{\onecat^{\op{\catw{Cat}(\catl{C})}}}(\hom_{\catw{Cat}(\catl{C})}(-, A), \hom_{\catw{Cat}(\catl{C})}(-, B)))$$
and:
$$\hom_{\catw{SET}}(X, \hom_{\onecat^{\op{\catw{Cat}_S(\catl{C})}}}(\hom_{\catw{Cat}(\catl{C})}(Y(-), A), \hom_{\catw{Cat}(\catl{C})}(Y(-), B)))$$
are naturally bijective in $X \in \cat$. Therefore, if the canonical function:
\begin{center}
\begin{tabular}{c}
$\hom_{\onecat^{\op{\catw{Cat}(\catl{C})}}}(\hom_{\catw{Cat}(\catl{C})}(-, A), \hom_{\catw{Cat}(\catl{C})}(-, X \pitchfork B))$\\
$\downarrow$\\
$\hom_{\onecat^{\op{\catw{Cat}_S(\catl{C})}}}(\hom_{\catw{Cat}(\catl{C})}(Y(-), A), \hom_{\catw{Cat}(\catl{C})}(Y(-), X \pitchfork B))$
\end{tabular}
\end{center}
is a bijection, then the canonical functor:
\begin{center}
\begin{tabular}{c}
$\hom_{\onecat^{\op{\catw{Cat}(\catl{C})}}}(\hom_{\catw{Cat}(\catl{C})}(-, A), \hom_{\catw{Cat}(\catl{C})}(-, B))\approx$\\
$\downarrow$ \\
$\hom_{\onecat^{\op{\catw{Cat}_S(\catl{C})}}}(\hom_{\catw{Cat}(\catl{C})}(Y(-), A), \hom_{\catw{Cat}(\catl{C})}(Y(-), B))$
\end{tabular}
\end{center}
is an isomorphism.

Denote by $\catw{Cat}_3(\catl{C})$ the full 1-subcategory of $\catw{Cat}(\catl{C})$ consisting of categories having at most three objects, and by $\mor{K}{\catw{Cat}_3(\catl{C})}{\catw{Cat}(\catl{C})}$ its embedding. We show that $\catw{Cat}_3(\catl{C})$ is a 1-dense subcategory of $\catw{Cat}(\catl{C})$, which by fully-faithfulness of $Y$ implies that $\catw{Cat}_S(\catl{C})$ is 1-dense subcategory of $\catw{Cat}(\catl{C})$, and by the above that it is 2-dense. 

The direction showing that the canonical morphism is injective is easy --- if $\mor{\alpha}{\hom_{\onecat(\catl{C})}(-, A)}{\hom_{\onecat(\catl{C})}(-, B)}$ is a natural transformation, then its restriction $\mor{\widetilde{\alpha}}{\hom_{\onecat(\catl{C})}(K(-), A)}{\hom_{\onecat(\catl{C})}(K(-), B)}$ to a subcategory is natural as well, and since $\catw{Cat}_3(\catl{C})$ is clearly a generating subcategory, then this assignment is injective. So let us focus on the other direction.

Observe that every $\catl{C}$-enriched category $A$ may be canonically represented as a colimit over at most three-object categories:
\begin{itemize}
\item for every triple of objects $X, Y, Z \in A$, let $A_{X, Y, Z}$ be the full subcategory of $A$ on this triple with injection $\mor{j_{X,Y,Z}^A}{A_{X, Y, Z}}{A}$; similarly define $\mor{j_{X,Y}^{X,Y,Z}}{A_{X, Y}}{A_{X,Y,Z}}$ for the full subcategory $A_{X, Y}$ of $A_{X, Y, Z}$ on every pair $X, Y \in A_{X, Y, Z}$, and $\mor{j_{X}^{X,Y}}{A_X}{A_{X, Y}}$ for the full one-object subcategory on every object $X \in A_{X, Y}$
\item for diagram $D_A$ consisting of all such defined injections   $\mor{j_{X,Y}^{X,Y,Z}}{A_{X, Y}}{A_{X,Y,Z}}$, $\mor{j_{X}^{X,Y}}{A_X}{A_{X, Y}}$, the category $A$ together with  $\mor{j_{X,Y,Z}^A}{A_{X, Y, Z}}{A}$ is the colimit of $D_A$ --- if $B$ is another category with cocone $\mor{\tau_{X,Y,Z}^A}{A_{X, Y, Z}}{B}$ then the unique functor $\mor{H}{A}{B}$ is given on objects by $H(X) = (\tau_{X,Y,Z}^A \circ j_{X,Y}^{X,Y,Z} \circ j_{X}^{X,Y})(X)$, and similarly on morphisms; the compositions are preserved by $H$, because they are preserved pairwise by each $\tau_{X,Y,Z}^A$, and preservation of identities is obvious. 
\end{itemize}
Let $\mor{\widetilde{\alpha}}{\hom_{\onecat(\catl{C})}(K(-), A)}{\hom_{\onecat(\catl{C})}(K(-), B)}$ be a natural transformation. By naturality, the diagram $D_A$ is mapped by $\widetilde{\alpha}$ to a cocone under $B$. By universal property of colimits, this cocone induces a morphism $\mor{c}{A}{B}$, which by Yoneda lemma is tantamount to the natural transformation:
$$\mor{\hom_{\onecat(\catl{C})}(-, c)}{\hom_{\onecat(\catl{C})}(-, A)}{\hom_{\onecat(\catl{C})}(-, B)}$$
We have to show that $\hom_{\onecat(\catl{C})}(-, c)$ on $\catw{Cat}_3(\catl{C})$ is equal to $\widetilde{\alpha}$, that is: for any at most three-element category $M$ and a functor $\mor{f}{M}{A}$ the composite $c \circ f$ is equal to $\widetilde{\alpha}(f)$. But this is easy. Let us assume that $M$ has exactly three objects $X, Y, Z$ then $\mor{f}{M}{A}$ factors as  $\mor{g}{M}{A_{f(X),f(Y),f(Z)}}$ through injection $\mor{j_{f(X),f(Y),f(Z)}^A}{A_{f(X),f(Y),f(Z)}}{A}$. By naturality of $\widetilde{\alpha}$ we have: $\widetilde{\alpha}(j_{f(X),f(Y),f(Z)}^A) \circ g = \widetilde{\alpha}(j_{f(X),f(Y),f(Z)}^A) \circ g) = \widetilde{\alpha}(f)$ and by the definition: $c \circ j_{f(X),f(Y),f(Z)}^A = \widetilde{\alpha}(j_{f(X),f(Y),f(Z)}^A)$. Therefore $c \circ f = \widetilde{\alpha}(f)$. A similar argument exhibits equality between components of natural transformations on less than three object categories.
\end{proof}
\begin{theorem}[$\catl{C}$-enriched 2-powers]\label{e:cat:enriched:triangle}
Let $\tuple{I, \otimes, \catl{C}}$ be a complete and cocomplete symmetric monoidal closed category. There is a Yoneda bitriangle:
$$\yon{\eta}{\mor{Y}{\catw{Cat}_S(\catl{C})}{\catw{Cat}(\catl{C})}}{\mor{J}{\catw{Cat}_S(\catl{C})}{\catw{Dist}(\catl{C})}}{\mor{P}{\catw{Dist}(\catl{C})}{\catw{Cat}(\catl{C})}}$$
where $\catw{Cat}_S(\catl{C})$ is the 2-category of small $\catl{C}$-enriched categories, $\catw{Cat}(\catl{C})$ is the 2-category of all (i.e.~locally small) $\catl{C}$-enriched categories, $\catw{Dist}(\catl{C})$ is the (weak) 2-category of $\catl{C}$-enriched distributors between small categories, and $J, Y$ are the canonical embeddings. Pseudofunctor: $$\mor{P}{\catw{Dist}(\catl{C})}{\catw{Cat}(\catl{C})}$$ is given by:
\begin{eqnarray*}
P(A) &=& \catl{C}^{\op{A}}\\
P(A \overset{F}\nrightarrow B) &=& \lan_{y_A}(F) 
\end{eqnarray*}
where $\mor{y_A}{A}{\catl{C}^{\op{A}}}$ is the enriched Yoneda functor.
\end{theorem}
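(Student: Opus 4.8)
The plan is to follow the strategy of the proof of Theorem~\ref{e:cat:internal:triangle}, with Lemma~\ref{l:dense:enriched} taking over the role that the (essentially trivial) $2$-density of discrete internal categories played there. First I would check that $P$ is a well-defined pseudofunctor $\catw{Dist}(\catl{C}) \to \catw{Cat}(\catl{C})$. For small $A$ the category $P(A) = \catl{C}^{\op{A}}$ is a legitimate (locally small) $\catl{C}$-enriched category --- its hom-objects are the ends $\int_{a \in A}[F(a), G(a)]$ in the closed structure of $\catl{C}$, which exist since $\catl{C}$ is complete --- and it is cocomplete because $\catl{C}$ is. Hence for a distributor $\dist{F}{A}{B}$, equivalently a $\catl{C}$-functor $A \to \catl{C}^{\op{B}}$, the pointwise left Kan extension $\lan_{y_A}(F) \colon \catl{C}^{\op{A}} \to \catl{C}^{\op{B}}$ exists, computed as a weighted colimit in $\catl{C}^{\op{B}}$, and is up to canonical isomorphism the unique cocontinuous extension of $F$ along $y_A$. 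Pseudofunctoriality of $P$, together with its coherence isomorphisms, then follows from the uniqueness of cocontinuous extensions, exactly as in the classical identification of distributors with cocontinuous functors between presheaf categories.

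Next I would verify the first of the two conditions in Definition~\ref{d:yoneda:bitriangle}, the natural equivalence
$$\hom_{\catw{Cat}(\catl{C})}(Y(-), P(B)) \;\approx\; \hom_{\catw{Dist}(\catl{C})}(J(-), B).$$
Up to unwinding notation this is the definition of $\catw{Dist}(\catl{C})$: a $\catl{C}$-enriched distributor $A \nrightarrow B$ is a $\catl{C}$-functor $\op{B}\otimes A \to \catl{C}$, equivalently a $\catl{C}$-functor $A \to \catl{C}^{\op{B}}$, and $\catl{C}$-natural transformations correspond on the nose. The one point to record is that this equivalence is the one induced by $\eta$: since $\eta_A = y_A$ is fully faithful, $\lan_{y_A}(F) \circ y_A \cong F$ for every distributor $\dist{F}{A}{B}$, so the assignment $F \mapsto P(F) \circ \eta_A$ agrees, up to natural isomorphism, with the tautological identification of distributors $A \nrightarrow B$ with $\catl{C}$-functors $A \to \catl{C}^{\op{B}}$, and is in particular an equivalence. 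Applying the same computation to $F = J(f)$ for a $\catl{C}$-functor $f$ shows that the naturality squares for $\eta$ commute up to coherent isomorphism, so $\eta \colon Y \to P \circ J$ is a genuine pseudonatural transformation (necessarily the enriched Yoneda embedding).

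Then I would invoke Lemma~\ref{l:dense:enriched}: the embedding $\mor{Y}{\catw{Cat}_S(\catl{C})}{\catw{Cat}(\catl{C})}$ is $2$-dense, hence in particular weakly $2$-dense in the sense of the remark following Definition~\ref{d:yoneda:bitriangle}. That remark now applies verbatim: given that $P$ satisfies the first condition, the chain
\begin{eqnarray*}
\hom(\hom_{\catw{Dist}(\catl{C})}(J(-), B),\; \hom_{\catw{Cat}(\catl{C})}(Y(-), C)) &\approx& \hom(\hom_{\catw{Cat}(\catl{C})}(Y(-), P(B)),\; \hom_{\catw{Cat}(\catl{C})}(Y(-), C)) \\
&\approx& \hom_{\catw{Cat}(\catl{C})}(P(B), C)
\end{eqnarray*}
supplies the second condition --- the first step using the first condition, the second using weak $2$-density. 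Therefore $\yon{\eta}{Y}{J}{P}$ is a Yoneda bitriangle; since $J$ is the usual equipment of $\catw{Cat}_S(\catl{C})$ with proarrows and $Y$ realises it as a full $2$-subcategory of $\catw{Cat}(\catl{C})$, this also exhibits $\catw{Cat}_S(\catl{C})$ as having $J$-relative $2$-powers.

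I expect the genuine obstacle of the whole development to be Lemma~\ref{l:dense:enriched} itself rather than anything in the present proof: in the enriched setting one cannot cut the problem down to discrete categories, so density has to be engineered through the three-object subcategories, and that is where all the real work sits. Given that lemma, the remaining steps here are essentially formal; the only care needed is bookkeeping of the coherence $2$-cells for $P$ and $\eta$ and checking that $\catl{C}^{\op{A}}$ stays within $\catw{Cat}(\catl{C})$.
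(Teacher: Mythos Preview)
Your proposal is correct and follows essentially the same approach as the paper: both arguments reduce the first condition to the definitional equivalence $\hom_{\catw{Dist}(\catl{C})}(A,B) \approx \hom_{\catw{Cat}(\catl{C})}(A, \catl{C}^{\op{B}})$, and then invoke Lemma~\ref{l:dense:enriched} (2-density of $\catw{Cat}_S(\catl{C})$ in $\catw{Cat}(\catl{C})$) together with the remark after Definition~\ref{d:yoneda:bitriangle} to obtain the second condition automatically. Your write-up is more explicit than the paper's---you spell out why $P$ is a pseudofunctor, why $\eta$ is pseudonatural, and why the equivalence is the one induced by $\eta$---but the underlying strategy and the identification of Lemma~\ref{l:dense:enriched} as the only nontrivial ingredient are identical.
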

\begin{proof}
By definition of $\catw{Dist}(\catl{C})$ there is an equivalence of categories:
$$\hom_{\catw{Dist}(\catl{C})}(\catl{A}, \catl{B}) \approx \hom_{\onecat(\catl{C})}(\catl{A}, \catl{C}^{\op{\catl{B}}})$$
By Lemma \ref{l:dense:enriched} category $\catw{Cat}_S(\catl{C})$ is a 2-dense subcategory of $\catw{Cat}(\catl{C})$; therefore $P$ is a pointwise left Kan extension of $Y$ along $G$.
\end{proof}
It should be noted that the proarrow equipments in the above examples are canonically determined by the 2-categories of internal and enriched categories respectively --- in fact the categories of distributors are equivalent to the (weak) 2-categories of codiscrete cofibred spans (Theorem 14 in \cite{variations}) in these categories. One can seek a characterisation of a 2-topos along this line, but we leave it for a careful reader, as it is mostly irrelevant for our considerations.

\section{Power semantics} \label{s:power:semantics}
If ${\models} \subseteq {M \times S}$ is a binary relation between two sets: $M$, which is thought of as a set of models, and $S$, which is thought of as a set of syntactic elements (sentences), then we have ``for free'' Boolean semantics for propositional connectives formed over set $S$:
\begin{center}
\begin{tabular}{lcl}
$M \models \top$ &iff& true\\
$M \models \bot$ &iff& false\\
$M \models x \wedge y$ &iff& $M \models x \;\;\;\text{and}\;\;\; M \models y$\\
$M \models x \vee y$ &iff& $M \models x \;\;\;\text{and}\;\;\; M \models y$\\
$M \models x \Rightarrow y$ &iff& $M \models x \;\;\;\text{implies}\;\;\; M \models y$\\
\end{tabular}
\end{center}
More generally, in any topos with a subobject classifier $\Omega$, a relation ${\models} \subseteq {M \times S}$ corresponds to a morphism $\mor{\nu}{S}{\Omega^M}$. Since for every object $M$ the power object $\Omega^M$ inherits an internal Heyting algebra structure from $\Omega$, we may give the valuation semantics for propositional connectives in $S$ via the composition:
\begin{center}
\begin{tabular}{lcl}
$\nu(\top)$ &$\approx$& $\top$ \\
$\nu(\bot)$ &$\approx$& $\bot$ \\
$\nu(x \wedge y)$ &$\approx$& $\nu(x) \wedge \nu(y)$ \\
$\nu(x \vee y)$ &$\approx$& $\nu(x) \vee \nu(y)$ \\
$\nu(x \Rightarrow y)$ &$\approx$& $\nu(x) \Rightarrow \nu(y)$ \\
\end{tabular}
\end{center}
where $x, y \in S$ are generalised elements. The above should be read as follows --- given any generalised elements $X \to^{x, y} S$ there is a diagram:
$$\bfig
\node x(0,0)[X]
\node s(400,0)[S]
\node p(800,0)[\Omega^M]
\arrow/{@{>}@/_1em/}/[x`s;y]
\arrow/{@{>}@/^1em/}/[x`s;x]
\arrow[s`p;\nu]
\efig$$
then the semantics of meta-formula ``$x \wedge y$'' is:
$$\wedge \circ (\nu \circ x \times \nu \circ y)$$
where $\Omega^M \times \Omega^M \to^\wedge \Omega^M$ is the internal conjunction morphism in internal Heyting algebra $\Omega^M$; similarly for the other connectives.  
\begin{example}[Free propositional semantics]
Let us start with a set $\word{Var}$ and the equality relation  ${=} \subseteq {\word{Var} \times \word{Var}}$. Since every set is isomorphic to a coproduct on singletons, all generalised elements of a set are recoverable from its global elements. Therefore, we may restrict our semantics to global elements only. For every pair of elements $x, y \in \word{Var}$ the free semantics for the meta-conjunction $x \wedge y$ is $\nu(x) \wedge \nu(y) = v \mapsto (x = v) \wedge (y = v)$, and similarly for other connectives. Observe that this gives semantics for a pair $x, y \in \word{Var}$ interpreted as conjunction $x \wedge y$, without saying what exactly $x \wedge y$ is. If one is not comfortable with such semantics then one may ``materialize'' elements by forming an initial algebra. Formally, for a given set $\word{Var}$ let us define an endofunctor on $\catw{Set}$:
$$\word{F}(X) = (X \times X) \sqcup (X \times X) \sqcup (X \times X) \sqcup 1 \sqcup 1$$
and $\word{Prop}_\word{Var}$ as  the initial algebra for $F(X) \sqcup \word{Var}$. Now, the free semantics of ${=} \subseteq {\word{Var} \times \word{Var}}$ may be extended to the semantics for $\word{Prop}_\word{Var}$ via the unique morphism from the initial algebra to the algebra:
$$(2^\word{Var} \times 2^\word{Var}) \sqcup (2^\word{Var} \times 2^\word{Var}) \sqcup (2^\word{Var} \times 2^\word{Var}) \sqcup 1 \sqcup 1 \sqcup \mathit{Var} \to^{[\wedge,\vee,\Rightarrow,\top,\bot, =]} 2^\word{Var}$$ 
\end{example}
Much more is true. Not only does the power object $\Omega^M$ have all propositional connectives, in a sense, which we make precise in this section, $\Omega^M$ has all possible connectives.
\begin{example}[Relational semantics in $\catw{Set}$]
Let ${r} \subseteq {M \times M \times M}$ be a ternary relation on a set $M$. Then there is a corresponding binary operation $\otimes_r$ on $\Omega^M$ defined as follows:
$$f \otimes_r g = \lambda x \mapsto \underset{a, b \in M}{\exists} f(a)  \wedge g(b) \wedge r(x, a, b)$$
Moreover, $\otimes_r$ has ``exponentiations'' on each of its coordinates. They are given by the following formulae:
\begin{eqnarray*}
f \overset{L}\multimap_r g &=& \lambda a \mapsto \underset{x, b \in M}\forall f(b) \wedge r(x, a, b) \Rightarrow g(x) \\
f \overset{R}\multimap_r g &=& \lambda b \mapsto \underset{x, a \in M}\forall f(a) \wedge r(x, a, b) \Rightarrow g(x)
\end{eqnarray*}
We get the usual propositional connectives by considering relations associated to the unique comonoid structure $\tuple{\mor{!}{M}{1}, \mor{\Delta}{M}{M \times M}}$ in a cartesian closed category $\catw{Set}$ --- for $\mor{\phi, \psi}{M}{2}$: 
\begin{eqnarray*}
(\phi \wedge \psi)(x) &\mathit{iff} & \underset{a, b \in M}{\exists} \phi(a)  \wedge \psi(b) \wedge \tuple{x,x} = \tuple{a, b}\\
&\mathit{iff}& \phi(x)  \wedge \psi(x)
\end{eqnarray*}
and:
\begin{eqnarray*}
(\phi \overset{L}\Rightarrow \psi)(a) &\mathit{iff} & \underset{x, b \in M}\forall \phi(b) \wedge \tuple{x,x} = \tuple{a, b} \Rightarrow \psi(x) \\
&\mathit{iff} &  \phi(a) \Rightarrow \psi(a) \\
&\mathit{iff} & (\phi \overset{R}\Rightarrow \psi)(a)
\end{eqnarray*}
\end{example}
One may recognise in the above example the concept of ternary frame semantics for substructural logics \cite{ternarysemantics}.  The crucial point, however, is that such defined semantics have 2-dimensional analogues. The next example was the subject of Brain Day's thesis \cite{day}.

\begin{example}[Day convolution]\label{e:day}
Let $\tuple{\catl{C}, \otimes, I}$ be a complete and cocomplete symmetric monoidal closed category. Suppose $\dist{M}{\catl{A} \otimes \catl{A}}{\catl{A}}$ is a $\catl{C}$-enriched distributor. The convolution of $M$ is a functor $\mor{\otimes_M}{\catl{C}^{\op{\catl{A}}} \otimes \catl{C}^{\op{\catl{A}}}}{\catl{C}^{\op{\catl{A}}}}$ defined by the coeand:
$$(F \otimes_M G)(-) = \int^{B, C \in \catl{A}} F(B) \otimes G(C) \otimes M(-, B, C)$$
If $\tuple{\catl{A}, \dist{M}{\catl{A} \otimes \catl{A}}{\catl{A}}, \mor{J}{\op{\catl{A}}}{\catl{C}}}$ is a $\catl{C}$-promonoidal category (i.e.~a weak monoid in a (weak) 2-category of $\catl{C}$-enriched distributors (Chapter 2 of \cite{day})). The induced by convolution operation on $\catl{C}^{\op{\catl{A}}}$  yields a monoidal structure $\tuple{\catl{C}^{\op{\catl{A}}}, \otimes_M, J}$ on $\catl{C}^{\op{\catl{A}}}$. First observe that $J$ is the right unit of $\otimes_M$:
\begin{eqnarray*}
(F \otimes_M J)(-) & = & \int^{B, C \in \catl{A}} F(B) \otimes J(C) \otimes M(-, B, C)  \\
& \approx & \int^{B \in \catl{A}} F(B) \otimes \hom_\catl{A}(-, B) \\
& \approx & F(-)
\end{eqnarray*}
where $\int^{C \in \catl{A}} J(C) \otimes M(-, B, C) \approx \hom_\catl{A}(-, B)$ because $J$ is the promonoidal right unit of $M$. Similarly, $J$ is the left unit of $\otimes_M$. If the promonoidal structure on $\catl{A}$ is induced by a monoidal structure --- i.e.~if:
$$M(-, B, C) = \hom_\catl{A}(-, B \otimes_M C)$$
then this structure is preserved by the Yoneda embedding --- there is a natural isomorphism:
$$\hom_\catl{A}(-, X) \otimes_M \hom_\catl{A}(-, Y) \approx M(-, X, Y)$$
Indeed, by definition $$\hom_\catl{A}(-, X) \otimes_M \hom_\catl{A}(-, Y) = \int^{B, C \in \catl{A}} \hom_\catl{A}(B, X) \otimes \hom_\catl{A}(C, Y) \otimes M(-, B, C)$$ which via Yoneda reduction is isomorphic to $M(-, X, Y)$.
\end{example}
%
Brain Day showed more --- every monoidal structure induced via convolution is a \emph{(bi)closed} monoidal structure. The left linear exponent is defined by the end: 
$$(F \multimap_M^L G)(-) = \int_{A, C \in \catl{A}} G(A)^{F(C) \otimes M(A, -, C)}$$
and the right linear exponent by the end:
$$(F \multimap_M^R G)(-) = \int_{A, B \in \catl{A}} G(A)^{F(B) \otimes M(A, B, -)}$$
We have to show that:
$$\hom(H, F \multimap_M^L G) \approx \hom(H \otimes_M F, G)$$
Unwinding the right hand side, we get:
\begin{eqnarray*}
\hom(H \otimes_M F, G) & = & \hom(\int^{B, C \in \catl{A}} H(B) \otimes F(C) \otimes M(-, B, C), G) \\
& \approx & \int_{B, C \in \catl{A}} \hom(H(B) \otimes F(C) \otimes M(-, B, C), G) \\
& \approx & \int_{A, B, C \in \catl{A}} G(A)^{H(B) \otimes F(C) \otimes M(A, B, C)} \\
& \approx & \int_{A, B, C \in \catl{A}} (G(A)^{F(C) \otimes M(A, B, C)})^{H(B)} \\
& \approx & \int_{A, C \in \catl{A}} \hom(H, G(A)^{F(C) \otimes M(A, -, C)}) \\
& \approx & \hom(H, \int_{A, C \in \catl{A}} G(A)^{F(C) \otimes M(A, -, C)}) \\
& \approx & \hom(H, F \multimap_M^L G)
\end{eqnarray*}
and similarly for the other variable.


We show that a similar phenomenon occurs for internal categories. In \cite{hop} Brain Day and Ross Street defined a notion of convolution within a monoidal (weak) 2-category (Proposition 4). For a reason that shall become clear in a moment, we are willing to call it ``virtual convolution''. Here is their definition. Let:
$$\langle A, \delta \colon A \rightarrow A \otimes A, \epsilon \colon A \rightarrow I \rangle$$
be a weak comonoid, and:
$$\langle B, \mu \colon B \otimes B \rightarrow B, \eta \colon I \rightarrow B \rangle$$
be a weak monoid in a monoidal (weak) 2-category with tensor $\otimes$ and unit $I$, then:
$$\langle \hom(A, B), \star, i \rangle$$
is a monoidal category by:
\begin{eqnarray*}
f \star g &=& \mu \circ (f \otimes g) \circ \delta\\
i &=& \eta \circ \epsilon 
\end{eqnarray*}
So the ``virtual convolution'' structure exists ``virtually'' --- on $\hom$-categories. If a monoidal 2-category admits all right Kan liftings, then the induced monoidal category $\langle \hom(I, B), \star, i \rangle$ for trivial comonoid on $I$ is monoidal (bi)closed by:
$$f \overset{L}\multimap h = \mathit{Rift}_{\mu \circ (f \otimes \mathit{id})}(h)$$

$$f \overset{R}\multimap h = \mathit{Rift}_{\mu \circ (\mathit{id} \otimes f)}(h)$$
where $\mathit{Rift}_{\mu \circ (f \otimes \mathit{id})}(h)$ is the right Kan lifting of $h$ along $\mu \circ (f \otimes \mathit{id})$ and $\mathit{Rift}_{\mu \circ (\mathit{id} \otimes f)}(h)$ is the right Kan lifting of $h$ along $\mu \circ (\mathit{id} \otimes f)$.

Taking for the monoidal 2-category the category of distributors, we obtain the well-known formula for convolution. However, in the general setting, such induced structure is far weaker than one would wish to have --- for example in the category of distributors enriched over a monoidal category $\mathbb{C}$ the induced convolution instead of giving a monoidal structure on the category of enriched presheaves:
$$\mathbb{C}^{B^{op}}$$
merely gives a monoidal structure on the underlying ($\catw{Set}$-enriched) category\footnote{There is a work-around for this issue in the context of enriched categories, as suggested in the \cite{hop}, but the general weakness of ``virtual convolution'' is obvious.}:
$$\hom(I, \mathbb{C}^{B^{op}})$$
The solution is to find a way to ``materialize'' the ``virtual convolution''. Here is a materialization for internal categories.
\begin{theorem}[Internal convolution]\label{t:internal:convolution}
Let $\catl{C}$ be a locally cartesian closed category with finite colimits. For every $\catl{C}$-internal distributor $\dist{\mu}{A \times A}{A}$ there is a canonical (bi)closed magma on $\mathit{fam}(\mathbb{C})^{\mathit{fam}(A)^{op}}$. Furthermore, if $\dist{\mu}{A \times A}{A}$ together with $\dist{\eta}{1}{A}$ is a weak (symmetric) monoid, then the induced magma is weak (symmetric) monoidal.
\end{theorem}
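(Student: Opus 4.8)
The plan is to \emph{materialise} the ``virtual convolution'' of Day and Street by transporting it along the Yoneda bitriangle of Theorem~\ref{e:cat:internal:triangle}. Write $\mor{P}{\catw{dist}(\catl{C})}{\catw{Cat}^{\op{\catl{C}}}}$ for the pseudofunctor of that theorem, so that $P(A) = \mathit{fam}(\catl{C})^{\op{\mathit{fam}(A)}}$ and $P(D) = \lan_{y_A}(D)$ for a distributor $\dist{D}{A}{B}$. Two features of $P$ will do the work. First, $P(D)$, being a left Kan extension along an internal Yoneda embedding, preserves all internal colimits; these exist because $\catl{C}$ is locally cartesian closed with finite colimits (an internal coend is a coequaliser of a pair between reindexing-sums $\Sigma_f$, all of which $\catl{C}$ possesses). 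Second, the externalisation $\mathit{fam}$ preserves finite products, and the fibrewise cartesian closed structure of $\mathit{fam}(\catl{C})$ yields an \emph{external tensor} $\mor{\boxtimes}{P(A) \times P(B)}{P(A \times B)}$ with $(F \boxtimes G)(a,b) = F(a) \times G(b)$. I would first record two properties of $\boxtimes$: it is \emph{pseudonatural along distributors}, i.e.\ for $\dist{D}{A}{A'}$ and $\dist{E}{B}{B'}$ one has $P(D \times E)(F \boxtimes G) \approx P(D)(F) \boxtimes P(E)(G)$ — which reduces to the fact that a binary product in a slice of $\catl{C}$ preserves colimits in each variable (being a composite of the left adjoints $\Sigma$ and pullback), so the two iterated coends agree; and it is associative, unital and symmetric up to coherent isomorphism, these constraints being inherited from the cartesian product of $\mathit{fam}(\catl{C})$.

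Given $\dist{\mu}{A \times A}{A}$, I would then set $F \otimes_\mu G = P(\mu)(F \boxtimes G)$, a morphism $P(A) \times P(A) \to P(A)$ of $\catw{Cat}^{\op{\catl{C}}}$; unwinding the Kan extension gives the expected internal coend $(F \otimes_\mu G)(a) = \int^{b,c} F(b) \times G(c) \times \mu(a,b,c)$. This is the magma. For (bi)closedness, observe that $F \otimes_\mu (-)$ factors as $P(\mu) \circ (F \boxtimes -)$: the functor $F \boxtimes (-)$ has the right adjoint $H \mapsto \bigl( a' \mapsto \int_{a} H(a,a')^{F(a)} \bigr)$, an internal end available by local cartesian closedness; and $P(\mu)$ admits the right adjoint $N_\mu$ given explicitly by the $\mu$-nerve $N_\mu(G)(a,b) = \int_{c} G(c)^{\mu(c,a,b)}$, the defining hom-isomorphism being a routine Fubini-and-exponential manipulation (so no adjoint functor theorem is needed). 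Composing, $F \multimap^L_\mu (-) = [F, N_\mu(-)]$ exists, with the familiar formula $\int_{a,c} G(c)^{F(a) \times \mu(c,a,a')}$; the right internal hom $(-) \multimap^R_\mu G$ is obtained symmetrically from $(-) \boxtimes G$. This settles the first assertion.

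For the monoidal part, suppose $\langle \mu, \eta \rangle$ is a weak monoid in $\catw{dist}(\catl{C})$ for the product tensor (a promonoidal structure on $A$). Combining pseudonaturality of $\boxtimes$ with pseudofunctoriality of $P$ gives $(F \otimes_\mu G) \otimes_\mu H \approx P(\mu \circ (\mu \times \id{A}))\bigl((F \boxtimes G) \boxtimes H\bigr)$ and $F \otimes_\mu (G \otimes_\mu H) \approx P(\mu \circ (\id{A} \times \mu))\bigl(F \boxtimes (G \boxtimes H)\bigr)$, so the associativity $2$-cell of the weak monoid, pasted with the associativity constraint of $\boxtimes$, yields the associator of $\otimes_\mu$; the unit is $I_\mu = P(\eta)(1) \approx \eta \in P(A)$ (with $1$ the terminal, hence tensor unit, of $\mathit{fam}(\catl{C})$) and the unitors come the same way from $\mu \circ (\eta \times \id{A}) \approx \id{A} \approx \mu \circ (\id{A} \times \eta)$; and if $\langle \mu, \eta \rangle$ is symmetric, the symmetry $\mu \circ s \approx \mu$ together with that of $\boxtimes$ gives the braiding. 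The hard part — the step I expect to be the main obstacle — is the coherence: one must check that the pentagon (resp.\ triangle, resp.\ hexagon) for $\otimes_\mu$ commutes, which amounts to pasting the structural $2$-cells of $P$, the monoidal-coherence isomorphisms of $\boxtimes$, and the promonoidal coherence axioms of $\langle \mu, \eta \rangle$ into a single diagram with the right boundary. I would set this up by naming all the comparison $2$-cells explicitly and then appeal to the coherence theorems for pseudofunctors and for (symmetric) monoidal categories to collapse the interior, reducing each axiom for $\otimes_\mu$ to the corresponding axiom for $\langle \mu, \eta \rangle$; the bookkeeping is routine but lengthy, and I would indicate the key pasting rather than carry it out in full.
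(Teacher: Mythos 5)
Your proposal is correct, but it takes a genuinely different route from the paper's proof, so a comparison is worth recording. The paper works \emph{fibrewise}: it identifies each fibre $\mathit{fam}(\catl{C})^{\op{\mathit{fam}(A)}}(K)$ over a discrete $K$ with the hom-category $\hom_{\catw{dist}(\catl{C})}(1, K \times A)$ via the fibred Yoneda lemma and cartesian closedness of $\catw{Cat}^{\op{\catl{C}}}$, equips $K \times A$ with the product promonoidal structure $\tuple{\Delta^* \times \mu, \tuple{!^*, \eta}}$ coming from the unique comonoid on $K$, and then invokes the Day--Street ``virtual convolution'' (together with the existence of right Kan liftings in $\catw{dist}(\catl{C})$, supplied by local cartesian closedness) to get a monoidal biclosed structure on each such hom-category; the price is a final check that reindexing functors preserve the structure, which the paper settles by noting that pullbacks preserve equalisers and local exponents. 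You instead build the convolution \emph{globally} as the indexed functor $P(\mu) \circ \boxtimes$, with explicit internal (co)end formulas for the tensor and for the right adjoints $N_\mu$ and $[F,-]$; this buys you the reindexing-compatibility of the tensor for free (it is a morphism of $\catw{Cat}^{\op{\catl{C}}}$ by construction) and yields the closed structure without appealing to Kan liftings in the bicategory of distributors, but it forces you to redo the associativity/unit/symmetry coherence by hand -- precisely the bookkeeping you flag and defer, and which the paper sidesteps by inheriting coherence from the virtual-convolution result applied to a genuine promonoidal object. Two small points you should make explicit to close your argument: first, your right adjoints are given by internal ends ($\Pi$ along projections plus fibrewise exponents), so their stability under reindexing -- needed for the biclosed structure to be \emph{fibred} -- follows from the same Beck--Chevalley and local-exponent-preservation facts the paper cites; second, your pseudonaturality isomorphism $P(D \times E)(F \boxtimes G) \approx P(D)(F) \boxtimes P(E)(G)$ deserves at least the one-line justification you sketch (fibrewise binary products preserve internal colimits in each variable because slices of $\catl{C}$ are cartesian closed, plus Fubini for internal coends), since it carries the whole monoidal part of the theorem.
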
 
\begin{proof}
We shall present a proof for a promonoidal structure on $A$. The case of (bi)closed magma is analogical.
 
Since $\catl{C}$ is locally cartesian closed, every existing colimit in $\catl{C}$ is stable under pullbacks. In particular, coequalisers are stable under pullbacks, and we may form the (weak) 2-category of $\catl{C}$-internal distributors with compositions defined in the usual tensor-like manner (Section 3 of \cite{variations}, Section 3 of \cite{cohomology}). Moreover, local cartesian closedness allows us to ``transpose'' compositions (where coequalisers turn into equalisers, and pullbacks turn into local exponents) which makes the category of distributors admit all right Kan liftings\footnote{See \cite{variations} (Section 4)}.
We have to show that given a promonoidal structure $$\langle A, \mu \colon A \times A \nrightarrow A, \eta \colon 1 \nrightarrow A \rangle$$
there is a corresponding monoidal (bi)closed structure on:
$$\mathit{fam}(\mathbb{C})^{\mathit{fam}(A)^{op}}$$
i.e.~each fibre of $\mathit{fam}(\mathbb{C})^{\mathit{fam}(A)^{op}}$ is a monoidal closed category and reindexing functors preserve these monoidal structures. For $K \in \mathbb{C}$ interpreted as a discrete $\mathbb{C}$-internal category, there are isomorphisms:
\begin{eqnarray*}
\mathit{fam}(\mathbb{C})^{\mathit{fam}(A)^{op}}(K) & \approx & \hom(\hom(-, K), \mathit{fam}(\mathbb{C})^{\mathit{fam}(A)^{op}})\\
& \approx & \hom(1, \mathit{fam}(\mathbb{C})^{\hom(-, K) \times \mathit{fam}(A)^{op}})\\
&=& \hom_{\catw{dist}(\catl{C})}(1, K \times A)
\end{eqnarray*}
where the first isomorphism is the fibred Yoneda lemma, and the second is induced by cartesian closedness of $\catw{Cat}^{\op{\catl{C}}}$ and the fact that $K = \op{K}$ for discrete internal category $K$.

Since $K$ has a trivial promonoidal structure:
$$\tuple{K, K \times K \overset{\Delta^*}\nrightarrow K, 1 \overset{!^*}\nrightarrow K}$$
we obtain a ``product'' promonoidal structure on $K \times A$:
\begin{eqnarray*}
K \times A \times K \times A &\overset{\Delta^* \times \mu}\nrightarrow& K \times A \\
1 &\overset{\langle !^*, \eta \rangle}\nrightarrow& K \times A
\end{eqnarray*}
Explaining the above notion in more details --- observe that because $\mathbb{C}$ is cartesian, every object $K \in \mathbb{C}$ carries a unique comonoid structure:
\begin{eqnarray*}
K &\overset{\Delta}\rightarrow& K \times K \\
K &\overset{!}\rightarrow& 1
\end{eqnarray*}
which has a promonoidal right adjoint structure $\langle \Delta^*, !^* \rangle$ in the category of internal distributors. The product of the above two promonoidal structures is given by the usual cartesian product of internal categories  (note, it is not a product in the category of internal distributors) followed by the internal product functor $\mathit{fam}(\mathbb{C}) \times \mathit{fam}(\mathbb{C}) \overset{\mathit{prod}}\rightarrow \mathit{fam}(\mathbb{C})$.

Then, by ``virtual convolution'' there is a monoidal (bi)closed structure on $\hom_{\catw{dist}(\catl{C})}(1, K \times A)$. Therefore each fibre $\mathit{fam}(\mathbb{C})^{\mathit{fam}(A)^{op}}(K)$ is a monoidal (bi)closed category. Since pullback functors preserve equalisers, and in a locally cartesian category pullback functors preserve local exponents, they also preserve the convolution structure.
\end{proof}
Let us work out the concept of internal Day convolution in case $\mathbb{C} = \mathbf{Set}$, and see that it agrees with the usual formula for convolution.
\begin{example}[$\catw{Set}$-internal convolution]
The split family fibration (or more accurately, the indexed functor corresponding to the family fibration) for a (locally) small category $A$:
$$\mathit{fam}(A) \colon \mathbf{Set}^{op} \rightarrow \mathbf{Cat}$$
is defined as follows:
\begin{eqnarray*}
\mathit{fam}(A)(K \in \mathbf{Set}) &=& A^K \\ 
\mathit{fam}(A)(K \overset{f}\rightarrow L) &=& A^L \overset{(-) \circ f}\rightarrow A^K
\end{eqnarray*}
where $K, L$ are sets and $K \overset{f}\rightarrow L$ is a function between sets. One may think of category $A^K$ as of the category of $K$-indexed tuples of objects and morphisms from $A$. Given any monoidal structure on a small category $$\langle A, \otimes \colon A \times A \rightarrow A, I \colon 1 \rightarrow A \rangle$$
the usual notion of convolution induces a monoidal structure on $\mathbf{Set}^{A^{op}}$:
$$\langle F, G \rangle \mapsto F \otimes G =  \int^{B, C \in A} F(B) \times G(C) \times \hom(-, B \otimes C)$$
The split fibration:
$$\mathit{fam}(\mathbf{Set})^{\mathit{fam}(A)^{op}} \colon \mathbf{Set}^{op} \rightarrow \mathbf{Cat}$$
may be characterised as follows:
\begin{eqnarray*}
\mathit{fam}(\mathbf{Set})^{\mathit{fam}(A)^{op}}(K \in \mathbf{Set}) &=& \mathbf{Set}^{A^{op} \times K} \\ 
\mathit{fam}(\mathbf{Set})^{\mathit{fam}(A)^{op}}(K \overset{f}\rightarrow L) &=& \mathbf{Set}^{A^{op} \times L} \overset{(-) \circ (\mathit{id} \times f)}\rightarrow \mathbf{Set}^{A^{op} \times K}
\end{eqnarray*}
Since $\mathbf{Set}^{A^{op} \times K} \approx (\mathbf{Set}^{A^{op}})^K$ we may think of $\mathbf{Set}^{A^{op} \times K}$ as of $K$-indexed tuples of functors ${A^{op} \rightarrow \mathbf{Set}}$. In fact:
$$\mathit{fam}(\mathbf{Set})^{\mathit{fam}(A)^{op}} \approx \mathit{fam}(\mathbf{Set}^{A^{op}})$$
It is natural then to extend the monoidal structure induced on $\mathbf{Set}^{A^{op}}$ pointwise to $(\mathbf{Set}^{A^{op}})^K$:
$$(F \otimes G)(k) = \int^{B, C \in A} F(k)(B) \times G(k)(C) \times \hom(-, B \otimes C)$$
where $k \in K$.
On the other hand, using the internal formula for convolution, we get (up to a permutation of arguments):
\begin{center}
\begin{tabular}{c}
$\int^{B, C \in A, \beta, \gamma \in K} F(B, \beta) \times G(C, \gamma) \times \hom(\Delta(k), \langle \beta, \gamma \rangle) \times \hom(-, B \otimes C)$ \\ \hline\hline
$\int^{B, C \in A, \beta, \gamma \in K} F(B, \beta) \times G(C, \gamma) \times \hom(k, \beta) \times \hom(k, \gamma) \times \hom(-, B \otimes C)$ \\ \hline\hline
$\int^{B, C \in A} F(B, k) \times G(C, k) \times \hom(-, B \otimes C)$ \\
\end{tabular}
\end{center}
where the first equivalence is the definition of a diagonal $\Delta$, and the second one is by ``Yoneda reduction'' applied twice.
\end{example}
Note that the local cartesian closedness of the ambient category $\catl{C}$ was crucial for the proof. There is always the trivial (cartesian) monoidal structure on the terminal category $1$ internal to $\catl{C}$, but if $\catl{C}$ is not locally cartesian closed than its fundamental fibration $\mathit{fam}(\catl{C}) \approx \mathit{fam}(\catl{C})^1$ is not a cartesian closed fibration. 


There is another, more abstract, road to Day convolution for internal categories. Recall from \cite{log1} (Section 4, Definition 18) that if $\mor{F}{\catl{C}}{\catl{W}}$ is a functor from a 1-category $\catl{C}$ to a 2-category $\catl{W}$, then the $F$-externalisation $\word{fam}_F(A)$ of an object $A \in \catl{W}$ is defined to be the functor:
$$\mor{\hom_\catl{W}(F(-), A)}{\catl{C}^{op}}{\catw{Cat}}$$
For example, in Theorem \ref{e:cat:internal:triangle}, $\word{fam}(A)$ is an $F$-externalisation of an object (i.e.~internal category) $A \in \catw{cat}(\catl{C})$. However, the 2-power $P(A)$ may be itself defined as an ``externalisation'' --- namely, the $J \circ F$ ``externalisation'' of $A$. By fibred Yoneda lemma $P(A) \approx \word{fam}_{J \circ F}(A)$ iff there is a natural in $X \in \catl{C}$ isomorphism:
$$\hom_{\cat^{\op{\catl{C}}}}(\hom_\catl{C}(-, X), P(A)) \approx \hom_{\cat^{\op{\catl{C}}}}(\hom_\catl{C}(-, X), \word{fam}_{J \circ F}(A))$$
The left hand side by definition is isomorphic to $\hom_{\catw{dist}(\catl{C})}(J(F(X)), A)$, and by Fibred Yoneda lemma, the right hand side is isomorphic to $\word{fam}_{J \circ F}(A)(X)$, which by definition equals $\hom_{\catw{dist}(\catl{C})}(J(F(X)), A)$.
%

Therefore, the Yoneda bitriangle for internal powers may be redrawn as follows:

$$\bfig
\node c(0,0)[\catl{C}]
\node pc(0,400)[\cat^{\op{\catl{C}}}]
\node cat(800,0)[\cat(\catl{C})]
\node pcat(800,400)[\cat^{\op{\cat(\catl{C})}}]
\node dist(1600,0)[\catw{dist}(\catl{C})]
\node pdist(1600,400)[\cat^{\op{\catw{dist}(\catl{C})}}]

\arrow/->/[c`cat;F]
\arrow/->/[cat`dist;J]
\arrow|m|/-->/[c`pc;y_{\catl{C}}]
\arrow|m|/->/[cat`pcat;y_{\cat(\catl{C})}]
\arrow|m|/->/[dist`pdist;y_{\catw{dist}(\catl{C})}]
\arrow[pcat`pc; (-) \circ \op{F}]
\arrow[pdist`pcat; (-) \circ \op{J}]

\place(1250, 200)[\twoar(1, 0)\scriptstyle{\;J_1}]
\place(400, 200)[=]

\efig$$
where $\mor{F}{\catl{C}}{\cat(\catl{C})}$ is a strong (cartesian) monoidal functor, $\mor{J}{\cat(\catl{C})}{\catw{dist}(\catl{C})}$ is strong monoidal by the definition of tensor product on $\catw{dist}(\catl{C})$. Both $\op{F} \circ (-)$ and $\op{J} \circ (-)$ are lax monoidal, because $F$ and $G$ are strong monoidal. Moreover, natural transformation:
$$\mor{J_1}{\hom_{\catw{dist}(\catl{C})}(J(\ph{1}), J(\ph{2}))}{\hom_{\cat(\catl{C})}(\ph{1}, \ph{2})}$$
induced by the ``arrows-part'' of monoidal functor $\mor{J}{\cat(\catl{C})}{\catw{dist}(\catl{C})}$ is monoidal. 

Therefore we shall introduce the following concept.
\begin{definition}[Monoidal Yoneda (bi)triangle]\label{d:monoidal:yoneda:triangle}
A Yoneda (bi)triangle $\yon{\eta}{y}{f}{g}$ is monoidal if $f$ and $g$ are lax monoidal morphisms between (weak) monoidal objects, and $\eta$ is a monoidal 2-morphism.
\end{definition}

There are various possibilities to define universes that induce free semantics. Here is the weakest one.

\begin{definition}[Power semantics universe]\label{d:power:semantics:universe}
Let $\yon{\eta}{Y}{F}{G}$ be a monoidal Yoneda bitriangle $\mor{Y}{\catl{A}}{\overline{\catl{A}}}$, $\mor{F}{\catl{A}}{\catl{B}}$, $\mor{G}{\catl{B}}{\overline{\catl{A}}}$, where $\catl{A}$ admits a notion of discreteness, and $\overline{\catl{A}}$ has finite coproducts and admits a notion of discreteness with op-lax monoidal free functors. We call bitriangle $\yon{\eta}{Y}{F}{G}$ the power universe, if magmas mapped by $G$ are (bi)closed, $Y$ preserves discrete objects, and for every $V \in \disc(\catl{A})$ the functor:
$$F_V(X) \mapsto Y(V) \sqcup (X \otimes X) \sqcup (|X| \otimes X) \sqcup (|X| \otimes X)$$
has an initial algebra $\word{Lambek}_V$. 
\end{definition}
If $\yon{\eta}{Y}{F}{G}$ is a power semantics universe, then for every magma $\mor{R}{M \otimes M}{M}$ and every $\mor{\models}{V}{M}$ in $\catl{B}$, the free semantics of $V$ by $R$ is defined to be the the unique ``semantic'' morphism $\word{Lambek}_V \rightarrow P(M)$ from the initial algebra $\word{Lambek}_V$ to the algebra:
$$Y(V) \sqcup (P(M) \otimes P(M)) \sqcup (|P(M)| \otimes P(M)) \sqcup (|P(M)| \otimes P(M)) \to^{\models, \otimes_R, \overset{L}\multimap_R, \overset{R}\multimap_R} P(M)$$
where $\otimes_R, \overset{L}\multimap_R, \overset{R}\multimap_R$ is the internally (bi)closed magma on $P(M)$ induced by $R$. Similarly, we may introduce power semantics universe for (weak) monoids. Moreover, in many cases (universes induced by categories enriched over a complete and cocomplete symmetric monoidal category, and in universes induced by categories internal to a finitely cocomplete locally cartesian closed category) power objects $P(M)$ are internally cocomplete, thus, in particular, have internal coproducts. This observation makes it possible to extend the above semantics by propositional disjunctions and ``false'' value.
\begin{example}[Kripke semantics]\label{e:kripke:module}
A Kripke structure is a triple \newline $\tuple{S, {\leq} \subset {S \times S}, {\Vdash} \subseteq {S \times |\word{Prop}_V|}}$, where $\leq$ is a partial order on $S$, $\word{Prop}_V$ is the propositional syntax on a set of variables $V$, and $\Vdash$ is a ``forcing'' relation satisfying:
\begin{itemize}
\item (compatibility on variables) if $A \in V$ and $p,q \in S$ such that $p \leq q$ then ${p \Vdash A} \Rightarrow {q \Vdash A}$ 
\item (extensional true) $p \Vdash \top$ always holds
\item (extensional false) $p \Vdash \bot$ never holds
\item (extensional and) $p \Vdash \phi \wedge \psi$ iff $p \Vdash \phi$ and $p \Vdash \psi$
\item (extensional or) $p \Vdash \phi \vee \psi$ iff $p \Vdash \phi$ or $p \Vdash \psi$
\item (extensional implication) $p \Vdash \phi \Rightarrow \psi$ iff for all $q \in S$ such that $p \leq q$ we have: $q \Vdash \phi$ implies $q \Vdash \psi$
\end{itemize}
The compatibility condition on variables implies compatibility condition on all formulae, so every Kripke structure gives rise to logical system ${\Vdash \colon {\langle S, \geq \rangle}^{op} \times \word{Prop}_V \rightarrow 2}$, where $\langle S, \geq \rangle^{op} = \langle S, \leq \rangle$ is a degenerated category, and $\word{Prop}_V$ is the category induced by the logical consequence of $\Vdash$.

Kripke structures may be rediscovered as power semantics for trivial comonoidal structure in the power semantics universe of $2$-enriched categories. A poset ${\leq} \subseteq {S \times S}$ is exactly a $2$-enriched category $S$. Moreover, $S$ has the trivial comonoidal structure $\mor{\Delta}{S}{S \times S}$, which induces a promonoidal structure $\dist{\Delta^*}{S\times S}{S}$.

Given a ``forcing'' relation on variables ${\Vdash_V} \subseteq {S \times V}$ that satisfies compatibility condition (i.e.~is a 2-enriched distributor $\dist{{\Vdash_V}}{V}{\op{S}}$), there is the semantic homomorphism $\word{Lambek}_V \rightarrow 2^S$ induced by initiality of $\word{Lambek}_V$ and algebraic structure $\tuple{{\Vdash_V}, {\times}, {\overset{L}\Rightarrow}, {\overset{R}\Rightarrow}}$, where ${\times} = 2^{\Delta^*}$ is the usual cartesian product. Observe that since $\Delta^*$ is symmetric, both exponents ${\overset{L}\Rightarrow}$ and ${\overset{R}\Rightarrow}$ are essentially the same, and we may drop one of them from our signature. Furthermore, because $\Delta^*$ has also a unit, and $2$-enriched presheaves are cocomplete, one may extend the signature functor by additional operations representing true/false objects and disjunctions:
$$L_V(X) \mapsto Y(V) \sqcup 1 \sqcup 1 \sqcup (X \times X) \sqcup (|X| \times X)$$
The initial algebra for $L_V$ is the discrete propositional category $|\mathit{Prop}_V|$ and the Kripke semantics ${\Vdash} \subseteq {S \times |\mathit{Prop}_V|}$ is obtained as the transposition of the unique homomorphism to the algebra $\tuple{\Vdash_V, 1, 0, \times, \sqcup, \Rightarrow}$.
\end{example}
Let us recall the following example from \cite{log1}.
\begin{example}[Logical consequence]\label{e:logical:consequence:codensity}
Let $\onecat(2)$ be the 2-category of categories enriched in a 2-valued Boolean algebra $2 = \{0 \rightarrow 1\}$. A $2$-enriched category is tantamount to a partially ordered set (poset), and a $2$-enriched functor is essentially a monotonic function between posets. Let us consider a relation:
$${\models} \subseteq {\word{Mod} \times \word{Sen}}$$
thought of as a satisfaction relation between a set of models $\word{Mod}$ and a set of sentences $\word{Sen}$. By transposition, relation $\models$ yields the ``theory'' function $\mor{\word{th}}{\word{Mod}}{2^\word{Sen}}$, where $2^\word{Sen}$ is the poset of functions $\word{Sen} \rightarrow 2$, or equivalently the poset of subsets of $\word{Sen}$ .

Since ``power'' posets $2^\word{Sen}$ are internally complete in the 2-category $\onecat(2)$, the stable density product of $\mor{\word{th}}{\word{Mod}}{2^\word{Sen}}$ exists:
$$\bfig
\node x(0, 1000)[\word{Mod}]
\node c1(600, 1000)[2^\word{Sen}]
\node c2(0, 700)[2^\word{Sen}]

\arrow/->/[x`c1;\word{th}]
\arrow/->/[x`c2;\word{th}]
\arrow|r|/->/[c2`c1;T_\word{th}]
\place(230, 860)[\twoar(1, 1)]
\efig$$
and is given by the 2-enriched end:
$$T_\word{th}(\Gamma)(\psi) = \int_{M\in \mathit{Mod}} \word{th}(M)(\psi)^{\hom(\Gamma, \word{th}(M)(-))}$$
where $\psi \in \word{Sen}$ is a sentence, and $\Gamma \in 2^\word{Sen}$ is a set of sentences. We are interested in values of $T_\word{th}$ on representable functors (i.e.~single sentences) $\hom_\word{Sen}(-, \phi)$:
\begin{eqnarray*}
T_\word{th}(\hom_\word{Sen}(-, \phi))(\psi) &=& \int_{M\in \mathit{Mod}} \word{th}(M)(\psi)^{\hom(\hom_\word{Sen}(-, \phi), \word{th}(M)(-))}\\
&\approx& \int_{M\in \mathit{Mod}} \word{th}(M)(\psi)^{\word{th}(M)(\phi)}
\end{eqnarray*}
where the isomorphism follows from the Yoneda reduction. Observe that the exponent $\word{th}(M)(\psi)^{\word{th}(M)(\phi)}$ in a $2$-enriched world may be expressed by the implication ``${\word{th}(M)(\phi)} \Rightarrow \word{th}(M)(\psi)$'', or just ``${M \models \phi} \Rightarrow {M \models \psi}$'', where every component of the implication is interpreted as a logical value in the $2$-valued Boolean algebra. Furthermore, ends turn into universal quantifiers, when we move to $2$-enriched world. So, the end $\int_{M \in \mathit{Mod}} \word{th}(M)(\psi)^{\word{th}(M)(\phi)}$ is equivalent to the meta formula ``$\forall_{M\in \word{Mod}} \left({M \models \phi} \Rightarrow {M \models \psi}\right)$'', which is just the definition of logical consequence:
$$ \phi \models_\word{Sen} \psi \;\;\;\mathit{iff}\;\;\; \forall_{M\in \word{Mod}} \left({M \models \phi} \Rightarrow {M \models \psi}\right)$$
The general case, where $\Gamma$ is not necessarily representable, is similar:
$$T_\word{th}(\Gamma)(\psi)  \;\;\;\mathit{iff}\;\;\; \forall_{M\in \word{Mod}} \left(\left(\forall_{\phi \in \Gamma} M \models \phi\right) \Rightarrow {M \models \psi}\right)$$
Therefore, the density product of a satisfaction relation reassembles the semantic consequence relation. 
\end{example}
In this example the satisfaction relation ${\models} \subseteq {\word{Mod} \times \word{Sen}}$ induces semantic consequence relation ${\models_\word{Sen}} \subseteq {\word{Sen} \times \word{Sen}}$ via the density product. We have also seen in \cite{log1}, that density products are always equipped with a monad structure. In fact ${{\models_\word{Sen}} \subseteq {\word{Sen} \times \word{Sen}}}$ thought of as a $2$-enriched distributor acquires the monad structure from the density product. Because the 2-category of $2$-enriched distributors is cocomplete, this monad has a resolution as a Kleisli object $\word{Sen}_K$. In more details, a Kleisli object in the 2-category of $2$-enriched distributors may be described by generalised Grothendieck construction \cite{daw} --- objects in $\word{Sen}_K$ are the same as in $\word{Sen}$, whereas morphisms in $\word{Sen}_K$ are defined by:
$$\hom_{\word{Sen}_K}(\phi, \psi) = \phi \models_\word{Sen} \psi$$
Identities and compositions are induced by monad's unit and multiplication respectively. Then by definition of density product the relation ${{\models} \subseteq {\word{Mod} \times \word{Sen}}}$ extends to the relation:
$${\models} \subseteq {\word{Mod} \times \word{Sen}_K}$$
In an essentially the same manner one may extend the forcing relation  ${{\Vdash} \subseteq {S \times |\mathit{Prop}_V|}}$ of the above Example \ref{e:kripke:module} to the relation:
$${\Vdash} \subseteq {S \times \mathit{Prop}_V}$$
where $\mathit{Prop}_V$ is the Kleisli resolution for the density product on the forcing relation. 

The next example generalises semantics in Kripke structures.
\begin{example}[Ternary frame]
A ternary frame \cite{ternarysemantics} is a pair $\tuple{X, R}$, where $X$ is a set, and $\mor{R}{X \times X \times X}{2}$ is a ternary relation on $X$. Ternary frames were proposed as generalisations of Kripke structures to model substructural logics. Let $\Sigma_\word{Lambek}$ be the signature consisting of three binary symbols $\otimes$, $\overset{L}\multimap$ and  $\overset{R}\multimap$. The semantics for Lambek syntax in ternary frame $\tuple{X, R}$ is a relation ${\Vdash} \subseteq {X \times \word{Lambek}_\mathit{Var}}$ satisfying:
\begin{itemize}
\item $x \Vdash \phi \otimes \psi$ iff $\underset{y, z \in X}\exists\; y \Vdash \phi \wedge z \Vdash \psi \wedge R(x, y, z)$
\item $y \Vdash \phi \overset{L}\multimap \psi$ iff $\underset{x, z \in X}\forall\; z \Vdash \phi \wedge R(x, y, z) \Rightarrow x \Vdash \psi$
\item $z \Vdash \phi \overset{R}\multimap \psi$ iff $\underset{x, y \in X}\forall\; y \Vdash \phi \wedge R(x, y, z) \Rightarrow x \Vdash \psi$
\end{itemize}
Connectives are defined according to the nonassociative Lambek calculus defined on $2^X$ via the convolution of $R$. 
\end{example}

\section{Conclusions}
\label{s:conclusions}
In the paper we defined a general 2-categorical setting for extensional calculi and showed how intensional and extensional calculi can be combined into logical systems. We provided a notion of a generalised adjunction, which we call a Yoneda triangle, and showed that many concepts in category theory may be characterised as (higher) monoidal Yoneda triangles. We showed that the natural setting for convolution is a monoidal Yoneda bitriangle, and proved Day convolution theorem for internal categories. Such  monoidal Yoneda bitriangles provide semantic universes, where objects get their semantics (almost) for free --- this includes the usual semantics for propositional calculi, Kripke semantics for intuitionistic calculi and ternary frame semantics for substructural calculi including Lambek's lambda calculi, relevance and linear logics.

\end{document}